\newtheorem{lemma}{LEMMA}[section]
\newtheorem{proposition}[lemma]{PROPOSITION}
\newtheorem{corollary}[lemma]{COROLLARY}
\newtheorem{theorem}[lemma]{THEOREM}
\newtheorem{remark}[lemma]{REMARK}
\newtheorem{remarks}[lemma]{REMARKS}
\newtheorem{examples}[lemma]{EXAMPLES}
\newcommand{\nat}{\mathbbm{N}}
\renewcommand{\a}{\alpha}
\renewcommand{\b}{\beta}
\newcommand{\g}{\gamma}
\newcommand{\vp}{\varphi}
\newcommand{\ve}{\varepsilon}
\newcommand{\reald}{{\mathbbm R^d}}
\newcommand{\on}{\quad\text{ on }}
\newcommand{\und}{\quad\mbox{ and }\quad}
\newcommand{\inv}{^{-1}}
\newcommand{\ov}{\overline}
\newcommand{\W}{\mathcal W}  
\newcommand{\C}{\mathcal C}  
\newcommand{\F}{\mathcal F}
\renewcommand{\H}{{\mathcal H}}
\newcommand{\B}{\mathcal B}
\newcommand{\M}{\mathcal M}
\newcommand{\U}{{\mathcal U}}
\newcommand{\itemframe}%
{\setlength{\parskip}{10pt}\begin{enumerate} \setlength{\topsep}{10pt}%
\setlength{\itemsep}{15pt}\setlength{\parsep}{5pt}}
\newcommand{\mx}{\mu_x}
\newcommand{\my}{\mu_y}
\newcommand{\vy}{\ve_y}
\newcommand{\vx}{\ve_x}
\newcommand{\vz}{\ve_z}
\newcommand{\kap}{\operatorname*{cap}}
\newcommand{\tkap}{\widetilde{\operatorname*{cap}}}
\newcommand{\Uo}{\mathcal U_0}
 \title{Scaling invariant Harnack inequalities\\ in a general setting}
\author{Wolfhard Hansen and Ivan Netuka} 
\date{}
\begin{document}
\maketitle

\begin{abstract}
In a setting, where only ``exit measures'' are given,  as they are  associated with an arbitrary right continuous
 strong Markov process on a separable metric space, we provide simple criteria for 
the validity of Harnack inequalities for positive harmonic functions.
These inequalities are   scaling invariant
with respect to a metric on the state space which, having an associated Green function, 
may  be adapted to the special situation. In many cases, this also implies continuity of harmonic functions 
and H\"older continuity of bounded harmonic functions. 
The results apply to large classes of L\'evy (and similar) processes.

Keywords:  Harnack inequality; H\"older continuity; right process; balayage space; L\'evy process;   Green
 function; 3G-property;  capacity; Krylov-Safonov estimate; Ikeda-Watanabe formula.

MSC:     31B15, 31C15, 31D05, 31E05, 35J08, 60G51, 60J25, 60J40, 60J45, 60J65, 60J75.
\end{abstract}

\section{Overview}

The study of Harnack inequalities for positive  functions which are harmonic
with respect to rather general partial differential operators of second order, diffusions respectively,
has a long history (see \cite{kassmann-history} and the references therein).  
Fairly recently,   during the last 15 years,  Harnack inequalities 
have  been investigated for harmonic functions with respect to  various classes 
of discontinuous Markov processes, integro-differential operators respectively
(see \cite{bass-levin,bass-kassmann,bogdan-stos-sztonyk,grzywny,HN-harnack,kass-mim-jump,%
kass-mim-final,kim-mimica,mimica-levy,mimica-harnack,mimica-harmonic,rao-song-vondracek,%
sikic-song-vondracek,song-vondracek-1,song-vondracek-2,vondracek}). 

The aim of this paper  
is  to offer  a~very general  analytic approach  to scaling invariant Harnack inequalities for positive  
universally measurable functions on a~separable metric space $X$  which are
harmonic with respect to  given ``harmonic measures'' $\mu_x^U$ (not charging $U$), 
$x\in U$, $U$~open in $X$ (see (\ref{har-def})). 
For $V\subset U$ the corresponding measures are supposed to be compatible in a  way
 which is obvious for  exit distributions of right continuous strong Markov processes 
and harmonic measures on  balayage spaces (see Examples \ref{hunt-bal}). 
An additional ingredient we shall need is a~``quasi-capacity''  on $X$ having suitable scaling properties
such that an estimate of Krylov-Safonov type holds (see (\ref{Kry-Saf})). 

Then a certain property (HJ)
of the measures $\mu_x^U$, which in Examples \ref{hunt-bal} trivially holds for diffusions and harmonic spaces,
is necessary and sufficient for the validity of scaling invariant Harnack inequalities (Theorem \ref{m-harnack}).
For L\'evy processes 
it is easy to specify simple  properties of the L\'evy measure which imply (HJ) for the exit distributions
(see, for example, Lemma \ref{ls-suff}). 

In Section \ref{Green-Harnack}, we discuss properties of an associated ``Green function'' which 
allow us to prove a Krylov-Safonov estimate for the corresponding capacity. This leads to 
Theorem~\ref{G-harnack} and, using recent results on H\"older continuity from~\cite{H-hoelder}, to
Theorem~\ref{G-continuous} on (H\"older) continuity of harmonic functions.

After a first application to L\'evy processes (Theorem \ref{appl-levy}) we   discuss 
consequences of  an estimate of Ikeda-Watanabe type (Theorems \ref{IW-harnack} and \ref{iso}).

In a final Section \ref{section-intrinsic}, we indicate how a Green function satisfying (only) a weak 3G-property
 leads to Harnack inequalities which are scaling invariant with respect to an intrinsically
defined metric.

\section{Harmonic measures and harmonic functions}\label{setting}

Let $(X,\rho)$ be a separable metric space. In fact, the separability will only be used to ensure   
that  finite measures~$\mu$ on its $\sigma$-algebra $\B(X)$ of Borel subsets satisfy 
 \begin{equation}\label{tight}
 \mu(A)=\sup\{\mu(F)\colon F \mbox{ closed, } F\subset A\}, \qquad A\in \B(X)
 \end{equation} 
(recall that  every finite measure on the completion of $X$  is tight). 

For every open set $Y$ in $X$, let $\U(Y)$ denote the set of all open sets $U$ such that the closure
$\ov U$ of $U$ is contained in $Y$.
Given a~set $\F$ of numerical functions on~$X$, let~$\F_b$, $\F^+$ be the set of all functions in $\F$
which are bounded, positive respectively.
Let $\M(X)$ denote the set of all finite measures on $(X,\B(X))$
(which we also consider as measures on the $\sigma$-algebra $\B^\ast(X)$ of all
universally measurable sets).  For every  $\mu\in\M(X)$, let $\|\mu\|$ denote the total mass $\mu(X)$.

For sufficient flexibility in applications, we consider harmonic measures only for open
sets which are contained in a given open set $X_0$ of $X$. More precisely,
we suppose that we have    measures~$\mu_x^U\in\M(X)$, $x\in X$,  $U\in \U(X_0)$,  such that the following hold
for all $x\in X$ and $U,V \in \U(X_0)$ (where $\vx$ is  the  Dirac measure at~$x$):

{\it
\begin{itemize}
\item[\rm(M$_0$)]
 The measure $\mu_x^U$ is supported by $U^c$, and $\|\mu_x^U\| \le 1$. If $x\in U^c$, then $\mu_x^U=\ve_x$.
\item[\rm(M$_1$)]
The  functions $y\mapsto \mu_y^U(E)$, $E\in\B(X)$, are universally measurable on $X$ and
\begin{equation}\label{it-bal} 
\mu_x^U=(\mu_x^{V})^U := \int \mu_y^U\,d\mu_x^{V}(y), \qquad \mbox{ if } V\subset U.
\end{equation} 
\end{itemize} 
}

Of course, stochastic processes and  potential theory  abundantly provide examples (with $X_0=X$).

\begin{examples}\label{hunt-bal}{ \rm
1. Right  process $\mathfrak X$ with strong Markov property on a Radon space $X$ 
and
\[
   \mu_x^U(E):=\mathbbm P^x[X_{\tau_U}\in E],  \qquad E\in \B(X),
\]
where $  \tau_U:=\inf\{t\ge 0\colon X_t\in U^c\}$  
 (\cite[Propositions 1.6.5 and 1.7.11, Theorem 1.8.5]{beznea-boboc}).

If $U,V\in \U(X_0)$ with $V\subset U$, then $\tau_U=\tau_V+\tau_U\circ \theta_{\tau_V}  $,
and hence, by the strong Markov
property, for all $x\in X$ and $E\in \B(X)$, 
\begin{equation*}  
\mx^U(E)=\mathbbm P^x[X_{\tau_U}\in E]=\mathbbm E^x\left(\mathbbm
P^{X_{\tau_V}} [X_{\tau_U}\in E]\right)=\int \my^U(E)\,d\mx^V(y). 
\end{equation*}

2. Balayage space $(X,\W)$ (see {\rm \cite{BH,H-course}})  such that $1\in \W$,  
\[
               \int v\,d\mu_x^U= R_v^{U^c}(x):=\inf\{ w(x)\colon w\in \W,\ w\ge v \mbox{ on } U^c\}, \qquad v\in \W.
\] 
The properties {\rm(M$_0$)} and {\rm(M$_1$)} follow from \cite[VI.2.1, 2.4, 2.10, 9.1]{BH}. 
}
\end{examples}

Going back to the general setting,  let us consider  $x\in X$ and $U,V\in \U(X_0)$ such that $V\subset U$,
and note first 
that, having (M$_0$),  equality (\ref{it-bal}) amounts to the equality
\begin{equation}\label{it-bal-1}
\mu_x^U = \mu_x^{V}|_{U^c} + \int_{U}  \mu_y^U\,d\mu_x^{V}(y)
\end{equation} 
showing that $\mx^U\ge \mx^V$ on $U^c$. However, 
\begin{equation}\label{monoto}
\|\mu_x^U\| =\| (\mu_x^{V})^U\|  = \int \|\mu_y^U\|\,d\mu_x^{V}(y)\le \|\mx^V\|.
\end{equation} 
In particular, for any $A\in \B(X)$ containing  $U$,
\begin{equation}\label{reverse}
       \mu_x^U(A^c)\ge \mu_x^V(A^c) \und  \mu_x^U(A) \le \mu_x^V(A).
\end{equation}

For every $U\in \U(X_0)$, let  $\H(U)$ denote the set of all universally measurable real  functions~$h$ on~$X$ 
which are \emph{harmonic on $U$}, that is, such that, for  all  $V\in \U(U)$ and $x\in V$,
the function $h$ is $\mu_x^V$-integrable and 
\begin{equation}\label{har-def}
                     \int h\,d\mu_x^V=h(x).
\end{equation}
It is easily seen that, for  all bounded universally measurable functions $f$ on $X$ and $U\in \U(X_0)$, the function
\begin{equation}\label{ex-harmonic}
h\colon x\mapsto \int f\,d\mu_x^U
\end{equation} 
is harmonic on $U$. Indeed, it suffices to consider the case $f=1_{E_0}$, $E_0\in \B^\ast(X)$.
Let us fix $U\in \U(X_0)$, $V\in \U(U)$ and  $x\in X$. Then there are $E_1,E_2\in \B(X)$ such that 
$E_1\subset E_0\subset E_2$
and $\mx^U(E_1)=\mx^U(E_2)$.   By (\ref{it-bal}), 
\begin{equation}\label{E12}
                     \mx^U(E_j)=\int \my^U(E_j)\,d\mx^V(y) \qquad\mbox{ for }j=1,2.
\end{equation} 
Hence  $\my^U(E_1)=\my^U(E_2)$ for $\mx^V$-a.e.\  $y\in X$. This implies that the equality~(\ref{E12}) 
holds as well for~$j=0$.

\section{Scaling invariant Harnack inequalities}\label{scaling-invariant}

Let us define 
\begin{equation*} 
       U(x,r):=\{y\in X\colon \rho(x,y)<r\},\qquad x\in X, \, r>0.
\end{equation*} 
Moreover, let  $ R_0(x):=\sup\{r>0\colon \ov{U(x,r)}\subset X_0\}$, $x\in X_0$, and 
\begin{equation*} 
    \U_0:=\{U(x,r)\colon x\in X_0,\, r<R_0(x)\}.
\end{equation*} 

We are interested in  the following Harnack inequalities:
\begin{itemize}
\item[\rm (HI)] 
{\it There  exist $\a\in (0,1)$ and $K\ge 1$ such that,
for \hbox{all~$U(x_0,R)\in \Uo$}, 
\begin{equation*}
\sup h(U(x_0,\a R))\le  K \inf   h(U(x_0,\a R)
\quad \mbox{ for all }
    h\in \H_b^+(U(x_0, R) ).
\end{equation*} 
}
\end{itemize}

Let us immediately note   consequences for arbitrary positive harmonic functions.

\begin{proposition}\label{bounded-unbounded}
Suppose that {\rm (HI)} holds with   $\a\in (0,1)$ and \hbox{$K\ge 1$}.
\begin{itemize}
\item[\rm 1.] Then, for all $U(x_0,R)\in \Uo$   and $h\in \H^+(U(x_0,R))$, 
\begin{equation}\label{strong-harnack}
\sup h(U(x_0,\a R))\le K \inf   h(U(x_0,\a R)).
\end{equation}  
\item[\rm 2.] 
If, for all $U\in \U(X_0)$, the functions in $ \H_b^+(U)$ are
continuous on $U$, then,
 for all $U\in \U(X_0)$, the functions in $ \H^+(U)$ are
continuous on $U$.
\end{itemize}
\end{proposition}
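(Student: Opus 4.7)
My plan is: for part~(1), approximate the given (possibly unbounded) $h\in\H^+(U)$ from below by bounded harmonic Poisson-type integrals of truncations $h\wedge n$, apply the bounded Harnack inequality {\rm (HI)}, and pass to two successive limits. For part~(2), use part~(1) to upgrade the continuity of the bounded approximants to \emph{uniform} convergence, whence $h$ itself is continuous.

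\emph{Part~{\rm 1}.} Fix $U=U(x_0,R)\in\Uo$ and $h\in\H^+(U)$. For each $R'\in(\a R,R)$ the ball $V:=U(x_0,R')$ has $\ov V\subset U$, so $V\in\U(U)\cap\Uo$. For $n\in\nat$ set
\[
  h_n(y):=\int (h\wedge n)\,d\mu_y^V, \qquad y\in X.
\]
Since $h\wedge n$ is bounded and universally measurable, (\ref{ex-harmonic}) gives $h_n\in\H_b^+(V)$, so {\rm (HI)} on the ball $V$ yields $\sup h_n(U(x_0,\a R'))\le K\inf h_n(U(x_0,\a R'))$. Because $V\in\U(U)$, harmonicity of $h$ gives $h(y)=\int h\,d\mu_y^V$ for $y\in V$, so monotone convergence yields $h_n\uparrow h$ on $V$. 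Both $\sup h_n$ and $\inf h_n$ are nondecreasing in $n$, with $\sup h_n\uparrow\sup h$ and $\inf h_n\le\inf h$ throughout; letting $n\to\infty$ gives $\sup h(U(x_0,\a R'))\le K\inf h(U(x_0,\a R'))$. Finally, as $R'\uparrow R$ the balls $U(x_0,\a R')$ increase to $U(x_0,\a R)$, the sup passes up and the inf passes down to the corresponding quantities on $U(x_0,\a R)$, and (\ref{strong-harnack}) follows.

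\emph{Part~{\rm 2}.} Fix $U\in\U(X_0)$, $h\in\H^+(U)$, and $x\in U$. Choose $r>0$ so small that $W:=U(x,2r)\in\U(U)\cap\Uo$, and define $h_n(y):=\int(h\wedge n)\,d\mu_y^W$. Each $h_n\in\H_b^+(W)$ is continuous on $W$ by hypothesis. The difference $g_n:=h-h_n$ is universally measurable, positive on $W$ (since $h(y)=\int h\,d\mu_y^W\ge h_n(y)$), and harmonic on $W$, so part~(1) applied to $g_n$ on the ball $W$ gives
\[
  \sup g_n\bigl(U(x,2\a r)\bigr)\le K\inf g_n\bigl(U(x,2\a r)\bigr)\le K g_n(x).
\]
Dominated convergence (with $h$, which is $\mu_x^W$-integrable, as dominant) forces $g_n(x)=\int(h-h\wedge n)\,d\mu_x^W\to 0$, hence $h_n\to h$ uniformly on $U(x,2\a r)$. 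As a uniform limit of continuous functions, $h$ is continuous on this neighborhood of~$x$; since $x\in U$ was arbitrary, $h$ is continuous on all of $U$.

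\emph{Main obstacle.} The delicate step is the two-stage limit in part~(1): harmonicity of $h$ on $U$ only gives $\int h\,d\mu_y^V=h(y)$ for $V\in\U(U)$, never for $V=U$ itself, so the bounded approximants $h_n$ must be built on an interior ball $V=U(x_0,R')$ with $R'<R$, and one must afterwards exhaust $U(x_0,\a R)$ by $U(x_0,\a R')$ as $R'\uparrow R$. The rest is bookkeeping with monotone/dominated convergence and elementary limits of $\sup$ and $\inf$ over growing sets.
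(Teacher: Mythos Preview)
Your proof is correct and follows essentially the same approach as the paper: in part~1 you build the bounded approximants $h_n=\int(h\wedge n)\,d\mu_\cdot^{V}$ on an interior ball $V=U(x_0,R')$, apply {\rm (HI)}, and pass to the limits $n\to\infty$ and $R'\uparrow R$; in part~2 you apply the already-proven part~1 to $h-h_n\in\H^+$ on a small ball to upgrade pointwise convergence to uniform convergence. The only cosmetic difference is that the paper reuses the single ball $U(x_0,R')$ from part~1 for part~2, whereas you localize explicitly around an arbitrary point $x\in U$; the content is identical.
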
 

\begin{proof}   Let $U(x_0,R)\in \Uo$ and $V_r:=U(x_0,r)$, $0<r\le R$.
Further, let  $h\in \H^+(V_R)$ and   $0<R'<R$.

1. By (\ref{ex-harmonic}),  we may  define functions $h_n\in \H_b^+(V_{R'})$, 
$n\in\nat$, by 
\begin{equation}\label{def-hn}
h_n(x) :=\int (h\wedge n)\,d\mu_x^{V_{R'}}, \qquad x\in X. 
\end{equation} 
Then  $\sup h_n(V_{\a R'})\le K\inf h_n(V_{\a R'})$  for every $n\in\nat$ . Clearly, $h_n\uparrow h$ as $n\to \infty$. 
Thus $\sup h(V_{\a R'})\le K\inf h(V_{\a R'})$. Since $R'\in (0,R)$
was arbitrary,  (\ref{strong-harnack}) follows.

2. Since  $h-h_n\in  \H^+(V_{R'})$  for every $n\in\nat$, we
now see, by (\ref{strong-harnack}), that
\[
         h-h_n \le K (h-h_n)(x_0) \on V_{\a R'}.
\]
So   $h_n\to h$   uniformly on $V_{\a R'}$. Therefore $h$ is
continuous on $V_{\a R'}$. 
\end{proof}

Given $c\ge 1$, an increasing positive  function $A\mapsto m(A)$, $A$ universally measurable in
$X_0$, will be called a \emph{quasi-capacity} with constant $c\ge 1$ on $X_0$ if, for all universally measurable
 sets $A,B$ in $X_0$,
\begin{equation*}
        m(A)=\sup\{m(F)\colon  F \subset A, \ F\mbox{
                         closed}\}  \und  m(A\cup B)\le c(m(A)+m(B)).
\end{equation*} 
Clearly, every $\mu\in \M(X)$ (restricted to $X_0$) is a quasi-capacity
on $X_0$, and we note already
now that the capacitary set function $\kap$ which will be defined  in Section \ref{Green-Harnack}
is a quasi-capacity (both with constant $1$).

To obtain suitable criteria for the validity of (HI), we suppose that we have  a~quasi-capacity $m$
on $X_0$, 
  an increasing   continuous  function $m_0\colon (0,\infty)\to (0,\infty) $ 
and  $\a,a,\eta \in (0,1/3)$, $c_0\ge 1$ such that 
  the following translation property~(T), scaling property~(SC)   
and estimate~(KS) of Krylov-Safonov type hold:  
{\it
\begin{itemize} 
 \item[\rm (T)] 
For  every $U(x,r)\in \Uo$, 
 $c_0\inv m_0(r)\le m(U(x,r))\le c_0 m_0(r)$.
\item[\rm(SC)] 
For every $r>0$,  
$a m_0(r)\le m_0(\a r)$,  and   $\lim_{r\to 0}m_0(r)=0$.
\item[\rm (KS)] 
For all $U(x,r)\in \Uo$,
$y\in U(x,\a r)$ and closed sets  $F\subset  U(x, \a r)$,   
\begin{equation}\label{Kry-Saf} 
\mu_y^{U(x,r)\setminus F}(F)\ge \eta \, \, \frac{m(F)} {m(U(x,  r))}\,.
\end{equation} 
\end{itemize} 
}

\begin{remark}  
{\rm
Let us observe that (KS) is much weaker than the property of Krylov-Safonov type we may prove
under rather general assumptions on an associated Green function, which yield that (\ref{Kry-Saf})
holds with $m(U(x,\a r))$ in place of~$m(U(x,r))$ (see Proposition \ref{hit-A}).
}
\end{remark}

Finally, let us consider  the following property  
which, of course,  trivially holds if the measures $\mu_x^{U(x,\a r)}$ are supported by the 
boundary of $U(x,\a r)$. 
{\it
 \begin{itemize} 
  \item[\rm (HJ)] 
There exist $\a\in (0,1)$, $c_J\ge 1$ such that, 
for  all $x\in X_0$ and $0<r<\alpha R_0(x)$,          
\begin{equation}\label{harnack-jump-1}
\mu_x^{U(x,\a r)} \le c_J \mu_y ^{U(x,   r)} \mbox{ on }U(x,r)^c, \qquad y\in U(x,\a^2 r).
\end{equation} 
\end{itemize}
}

Clearly, (HI) implies (HJ), since, for every $E\in \B(X)$ with $E\subset U(x,r)^c$, the 
function $z\mapsto \mu_z^{U(x, r)}(E)$  is harmonic on $U(x, r)$
(see (\ref{ex-harmonic})) and  
$\mu_y^{U(x,\a r)}(E)\le \mu_y^{U(x,r)}(E)$, by (\ref{it-bal-1}). Moreover, again by (\ref{it-bal-1}), 
if (\ref{harnack-jump-1}) holds for some $\a\in (0, 1)$, then 
it holds for every smaller  $\a$. 
By the same argument, (HJ) is equivalent to the following property
(at the expense of replacing $\a$ by $\a/2$).

{\it
\begin{itemize} 
\item[\rm (HJ$^\ast$)] 
There exist $\a\in (0,1)$, $c_J\ge 1$ such that, 
for  all $x\in X_0$ and $0<r<\a  R_0(x)$,         
\begin{equation}\label{harnack-jump-3}
\mu_x^{U(x,\a r)} \le c_J \mu_y ^{U(y, r)} \mbox{ on }U(x,r)^c, \qquad y\in U(x,\a^2 r).
\end{equation} 
\end{itemize}
}

Indeed, suppose that (HJ$^\ast$) is satisfied. Let $x\in X_0$, $0<r<(\a/2)  R_0(x)$ and
 $y\in U(x,(\a/2)^2 r)$. Then $U(y,r/2)\subset U(x,r)$, and hence
\begin{equation*}
     \mu_x^{U(x, \a r/2)}\le \mu_y^{U(y,r/2)} \le \mu_y^{U(x,r)} \on U(x,r)^c.
\end{equation*} 
So (HJ) holds. Similarly for the reverse implication.
 
Our   main result on   scaling invariant  Harnack inequalities 
(assuming the properties  (T), (SC) and (KS)) is as follows.

\begin{theorem}\label{m-harnack}
{\rm (HI)}  holds if and only if {\rm(HJ)} holds.
In particular, {\rm (HI)} holds if the measures $\mu_x^{U(x, r)}$, $U(x,r)\in \Uo$, 
are supported by the boundary of $U(x ,r)$.
\end{theorem}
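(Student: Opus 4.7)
The direction (HI) $\Rightarrow$ (HJ) was essentially sketched before the theorem: for a Borel set $E\subset U(x,r)^c$, the function $h_E(z):=\mu_z^{U(x,r)}(E)$ is bounded by $1$ and harmonic on $U(x,r)\in\Uo$ by~(\ref{ex-harmonic}), so (HI) yields $h_E(x) \le K h_E(y)$ for $y \in U(x,\alpha^2 r) \subset U(x,\alpha r)$; combining with $\mu_x^{U(x,\alpha r)}(E)\le\mu_x^{U(x,r)}(E)=h_E(x)$ from~(\ref{it-bal-1}) gives (HJ) with $c_J=K$. The ``in particular'' clause is immediate, since the support hypothesis forces $\mu_x^{U(x,\alpha r)}(U(x,r)^c)=0$, trivially yielding (HJ).

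For the substantive direction (HJ) $\Rightarrow$ (HI), my plan is a Krylov--Safonov iteration tailored to the jump setting. Fix $U(x_0,R)\in\Uo$ and $h\in\H_b^+(U(x_0,R))$. The central ingredient will be a growth lemma: for a closed set $F\subset U(x_0,\alpha r)$ with $h\ge t$ on $F$, apply harmonicity on $V:=U(x_0,r)\setminus F\in\U(U(x_0,R))$ to obtain, for $y\in U(x_0,\alpha r)\setminus F$,
\[
h(y) \;=\; \int_F h\,d\mu_y^V \;+\; \int_{U(x_0,r)^c} h\,d\mu_y^V.
\]
The first summand is bounded below by $t\eta\,m(F)/m(U(x_0,r))$ by~(KS). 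The second (``jump'') summand will be shown to be \emph{comparable} at two points $y,z\in U(x_0,\alpha^2 r)$ by means of (HJ$^\ast$) combined with iterated applications of~(\ref{it-bal-1}): the idea is to recenter $\mu_y^V$ and $\mu_z^V$ on small balls about $y$ and $z$, respectively, so that (HJ$^\ast$) produces $\int_{U(x_0,r)^c}h\,d\mu_y^V\le c_J'\int_{U(x_0,r)^c}h\,d\mu_z^V$ with $c_J'$ depending only on $c_J$. Together, these two estimates convert substantial $m$-measure of a super-level set $\{h\ge t\}$ into a definite pointwise lower bound on $h$ throughout a concentric smaller ball.

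Iterating this growth lemma at geometrically shrinking scales---with scale-uniform constants thanks to (T) and (SC)---produces a weak Harnack estimate bounding the $m$-distribution of super-level sets of $h$ in terms of $\inf h$. A standard chaining argument along a finite path inside $U(x_0,\alpha R)$ then upgrades this to the full (HI) with constants $\alpha,K$ depending only on $c_0,a,\eta,\alpha,c_J$. I expect the main technical obstacle to be the recentering step in the comparison of jump parts: (HJ$^\ast$) compares harmonic measures with \emph{different} centers, while those appearing in the decomposition live on the irregular set $V=U(x_0,r)\setminus F$. Iterating~(\ref{it-bal-1}) carefully so that (HJ$^\ast$) may be applied without accumulating constants across scales is the delicate step that genuinely requires the interplay of (T), (SC), (KS) and (HJ) simultaneously.
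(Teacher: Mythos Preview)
Your treatment of (HI) $\Rightarrow$ (HJ) and of the ``in particular'' clause is correct and matches the paper.

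For (HJ) $\Rightarrow$ (HI), the gap is exactly where you yourself locate it, and it is a real one. Property (HJ$^\ast$) compares $\mu_x^{U(x,\alpha r)}$ with $\mu_y^{U(y,r)}$ on $U(x,r)^c$: these are harmonic measures for \emph{balls}, at \emph{different} scales. What you need is a comparison of $\mu_y^V$ and $\mu_z^V$ on $U(x_0,r)^c$ for the punctured domain $V=U(x_0,r)\setminus F$. Iterating~(\ref{it-bal-1}) with a small ball $V'\subset V$ gives $\mu_y^V(E)=\int \mu_w^V(E)\,d\mu_y^{V'}(w)$, but the unknown quantity $\mu_w^V(E)$ reappears in the integrand and cannot be eliminated; there is no recursion that terminates with only ball measures. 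Geometrically, if $F$ nearly surrounds $y$ but not $z$, the process from $y$ is likely to be absorbed by $F$ before it can jump out of $U(x_0,r)$, so $\mu_y^V|_{U(x_0,r)^c}$ and $\mu_z^V|_{U(x_0,r)^c}$ may differ by a factor depending on $F$, not merely on $c_J$. Without this step your weak-Harnack-plus-chaining scheme has no place where (HJ) actually enters: the lower bound $h(y)\ge t\,\eta\,m(F)/m(U(x_0,r))$ uses only (KS), and that alone cannot yield (HI) (otherwise (HJ) would not be needed).

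The paper avoids the comparison of $\mu_y^V$ entirely by running the iteration \emph{from above} and using (HJ) only in contrapositive form, applied to balls centred at a single point. The key step (Proposition~\ref{main-lemma}) is: given $h(x)>(1+\beta)^{n-1}K\inf h(U(x_0,\alpha R))$, one first applies (KS) at the full scale $R$ to show that the set $A=\{y\in B'':h(y)\ge\tilde\beta h(x)\}$ (with $B''=U(x,\alpha^2 r_n)$, $B'=U(x,\alpha r_n)$, $B=U(x,r_n)$) has $2c\,m(A)<m(B'')$, hence $A\ne B''$. Then (HJ) is used once: if the jump part $\mu_x^{B'}(1_{B^c}h)$ exceeded $\beta h(x)$, then for every $y\in B''$ one would have $h(y)=\mu_y^{B}(1_{B^c}h)\ge c_J^{-1}\mu_x^{B'}(1_{B^c}h)>\tilde\beta h(x)$, forcing $A=B''$, a contradiction. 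So the jump contribution from the \emph{ball} $B'$ is small. Choosing a large closed $F\subset B''\setminus A$, applying (KS) to $\nu=\mu_x^{B'\setminus F}$, and decomposing $h(x)=\int h\,d\nu$ then yields $\sup h(B)>(1+\beta)h(x)$. Iterating along radii $r_n$ with $\sum r_n<\alpha R$ produces values of $h$ tending to infinity inside $U(x_0,2\alpha R)$, contradicting $h\in\H_b^+$. The point is that (HJ) is only ever applied to $\mu_x^{B'}$ versus $\mu_y^B$ on $B^c$, never to measures for the irregular set $V$.
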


To prove this result let us assume for the remainder of this section 
  that (HJ) holds. We shall modify our results from \cite{HN-harnack}
(which were inspired by \cite{bass-levin}).
Suppose that (SC) and (KS)   hold with $a_1, \a_1$ and (HJ) holds with $\a_2$.

  We  choose first 
\begin{equation}\label{choice-a}
0<\a\le (\a_1\wedge \a_2)/4, 
\end{equation} 
next $l\in \nat$  with $\a_1^l\le \a$,  
and define $a:=a_1^l$. Then (SC), (KS)  and (HJ) (see (\ref{it-bal-1})) hold with these $\a$, $a$.
Let
\[
  \b:=\frac {\eta a} {4 c c_0^2} \und  \tilde \b:=\frac \b{c_J}. 
\]
Of course,   $\tilde \b\le \b\le 1/4$. We choose $j_0,k_0  \in \nat$ such that  
\begin{equation}\label{j0k0}
a (1+\b)^{j_0}>1, \qquad  \a^{{k_0-1}}<(1-\a)/j_0,
\end{equation} 
and fix
\begin{equation}\label{def-K}
      K\ge   \frac{2cc_0^2(1+\b)}{\eta\tilde \b  a^{k_0+2}}.
\end{equation} 
Let us now fix $x\in X_0$ and $0<R<R_0(x)$,
take $r_0:=\a^{k_0}  R$ and choose $r_n\in (0, r_0)$  with 
\begin{equation}\label{mrj}
m_0(r_n) = (1+\b)^{-n} m_0( r_0), \qquad n\in\nat.
\end{equation} 
 We claim that
\begin{equation}\label{sum-rj}
\sum\nolimits_{n\in\nat} r_n< \a R.
\end{equation} 
Indeed, if $n=i+ kj_0$,  where  $1\le i\le j_0$ and   $k\ge0$,
 then, by (\ref{mrj}), (\ref{j0k0}) and (SC),
\[
m_0 (r_n) <  (1+\b)^{-kj_0} m_0( r_0) \le  a^{ k}m_0( r_0) \le m_0 (\a^{k} r_0),
\]
and hence $r_n<\a^{k} r_0 $. 
So,  by   (\ref{j0k0}),
\begin{equation*} 
 \sum\nolimits_{n\in\nat} r_n< j_0 \sum\nolimits_{k=0}^\infty \a^{ k }r_0<\a R.
\end{equation*}

In connection with (\ref{sum-rj}) the following  result will immediately yield  Theorem~\ref{m-harnack}.

\begin{proposition}\label{main-lemma}
  Let  $h\in\H_b^+(U(x_0,R)) $ and 
suppose that $n\in\nat$ and there exists   $x \in U(x_0,2 \a R)$ with
\begin{equation}\label{hbK}
h(x)>(1+\b)^{n-1}K \, \inf h(U(x_0,\a R)).
\end{equation} 
Then  there exists a point $x'\in U(x, r_n)$ such that 
\[
       h(x')>(1+\b) h(x)>(1+\b)^n K \, \inf h(U(x_0,\a R)).
\]
\end{proposition}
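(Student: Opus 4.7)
The plan is to argue by contradiction: assume that $h(z) \le (1+\b) h(x)$ for every $z \in U(x, r_n)$, and write $h_0 := h(x)$. The idea is to apply the mean-value identity for $h$ at $x$ on a \emph{punctured} version of $W := U(x, \a r_n)$, splitting the resulting integral into three regions whose contributions can be estimated via (HJ), (KS) and the contradiction hypothesis.

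First I would choose a closed set $F \subset U(x, \a^2 r_n)\setminus\{x\}$ on which $h \le \tilde\b h_0$ and whose $m$-capacity is a definite fraction of $m(U(x, \a^2 r_n))$. Since $m$ is a quasi-capacity, and (T) together with (SC) gives $m(\{x\}) = 0$, such an $F$ can be extracted by inner regularity from the universally measurable set $\{z \in U(x, \a^2 r_n) : h(z) \le \tilde\b h_0\}$, provided this set has sufficient $m$-mass. Setting $V := W \setminus F$, one has $x \in V$ and $\ov V \subset U(x_0, R)$, so harmonicity at $x$ yields
\begin{equation*}
h_0 = \int_F h \, d\mu_x^V + \int_{U(x, r_n) \setminus W} h \, d\mu_x^V + \int_{U(x, r_n)^c} h \, d\mu_x^V.
\end{equation*}
Writing $p := \mu_x^V(F)$ and $q := \mu_x^V(U(x, r_n)\setminus W)$, I bound these three integrals respectively by $\tilde\b h_0 \, p$ (using $h \le \tilde\b h_0$ on $F$), by $(1+\b) h_0 \, q$ (using the contradiction hypothesis on $U(x, r_n)$), and by $c_J h(y) \le c_J \tilde\b h_0 = \b h_0$ for any $y \in F$ (using (\ref{it-bal-1}) with $V \subset W$ to reduce to $\mu_x^W$ on $U(x, r_n)^c$, then (HJ) to compare $\mu_x^W \le c_J \mu_y^{U(x, r_n)}$ there, and finally harmonicity of $h$ at $y$ on $U(x, r_n)$).

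Since $\mu_x^V$ is supported on $F \cup W^c$ and $\|\mu_x^V\| \le 1$, we have $p + q \le 1$; combining the three bounds then gives
\begin{equation*}
1 \le \tilde\b p + (1+\b)(1-p) + \b = 1 + 2\b - p(1 + \b - \tilde\b),
\end{equation*}
hence $p \le 2\b/(1 + \b - \tilde\b)$. Against this, (KS) at $W$ (with $F \subset U(x, \a\cdot\a r_n)$) gives $p \ge \eta \, m(F)/m(W)$, and (T), (SC) together with $m(F)$ chosen close to $m(U(x, \a^2 r_n))\ge c_0^{-1}am_0(\a r_n)$ yield $p \ge \eta a/c_0^2 = 4 c \b$, strictly exceeding $2\b/(1 + \b - \tilde\b)$ since $c \ge 1$ and $\tilde\b \le \b \le 1/4$; this is the sought contradiction.

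The delicate part is the construction of $F$, i.e.\ establishing that $\{z \in U(x, \a^2 r_n) : h(z) \le \tilde\b h_0\}$ has $m$-mass comparable to $m(U(x, \a^2 r_n))$. This is where the quantitative hypothesis $h_0 > (1+\b)^{n-1} K m$ and the specific coupling $m_0(r_n) = (1+\b)^{-n} m_0(r_0)$ must enter: the factor $\tilde\b$ in the denominator of the formula for $K$ (equivalent to $\tilde\b K = 2cc_0^2(1+\b)/(\eta a^{k_0+2})$) is engineered to match the scales of $h$ against $m$ and $m(U(x,\a^2 r_n))$ so that the sub-level set above is indeed large enough. Should this structural claim fail at the given scale, a fallback argument at a smaller scale — invoking the same (HJ), (KS) machinery at a point of $U(x, \a^2 r_n)$ where $h$ is anomalously large — is expected to close the gap.
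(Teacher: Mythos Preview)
Your overall architecture matches the paper's: take $\nu=\mu_x^{W\setminus F}$ for a closed $F\subset U(x,\a^2 r_n)$ on which $h<\tilde\b h(x)$, split $h(x)=\int h\,d\nu$ over $F$, over $U(x,r_n)\setminus W$, and over $U(x,r_n)^c$, bound the three pieces by $\tilde\b h(x)\nu(F)$, $(1+\b)h(x)(1-\nu(F))$, and $\b h(x)$ respectively (the last via (HJ) exactly as you say), and confront the result with the lower bound $\nu(F)\ge \eta\,m(F)/m(W)$ coming from (KS). This is precisely what the paper does.

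The gap is the step you yourself call ``the delicate part'': showing that the sub-level set $\{z\in U(x,\a^2 r_n):h(z)\le\tilde\b h(x)\}$ carries a definite fraction of $m(U(x,\a^2 r_n))$. Your ``fallback argument at a smaller scale'' is not the right idea. What the paper does is apply (KS) a \emph{second} time, but at the point $y_0\in U(x_0,\a R)$ realizing the infimum (normalized so $h(y_0)\le 1$) and in the \emph{large} ball $U(x_0,R')$, $R'=\tfrac34 R$, with target a closed $E$ inside the \emph{super}-level set $A=\{z\in U(x,\a^2 r_n):h(z)\ge\tilde\b h(x)\}$. Harmonicity at $y_0$ gives
\[
1\ge h(y_0)\ge \tilde\b\,h(x)\,\mu_{y_0}^{U(x_0,R')\setminus E}(E)\ge \tilde\b\,h(x)\,\eta\,\frac{m(E)}{m(U(x_0,R'))}.
\]
Now the hypothesis $h(x)>(1+\b)^{n-1}K$, the scale relation $m_0(r_n)=(1+\b)^{-n}m_0(r_0)$, and (T), (SC) together with the specific choice of $K$ in (\ref{def-K}) combine to force $2c\,m(A)<m(U(x,\a^2 r_n))$. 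The quasi-capacity inequality then gives $m(U(x,\a^2 r_n)\setminus A)>(2c)^{-1}m(U(x,\a^2 r_n))$, and inner regularity produces the desired $F$. So the missing idea is this second, global use of (KS) at the low point $y_0$; there is no iteration to smaller scales.
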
 

\begin{proof} By (\ref{hbK}), there is  a point $y_0\in U(x_0, \a R)$ such that $h(x)>(1+\b)^{n-1}K h(y_0)$.
Replacing $h$ by  $h/h(y_0)$, if $h(y_0)>0$,  we may assume that $h(y_0)\le 1$.

For every $0<s<R$, let $U_s:=U(x_0,s)$. Moreover, let $r:=r_n$, $R':=(3/4) R$ and
\begin{equation*} 
B:=U(x,r), \ \  B':=U(x,\a r),\ \  B'':=U(x,\a^2 r), \ \  
A:=\{y\in B''\colon h(y)\ge \tilde \b h(x)\}.
\end{equation*} 
 Then $ B \subset U_{4\a R'}\subset U_{\a_2R'}$ and $y_0\in U_ {\a_2 R'}$.
By (KS),  for every closed $E\subset A$,
\begin{equation*} 
 1\ge h(y_0)=\mu_{y_0}^{ U_{R'}\setminus E}   (h) \ge 
  \tilde \b h(x) \mu_{y_0}^{ U_{ R'}\setminus E} (E)\ge   \tilde \b h(x)  \eta {m(E)}/{m(U_{R'} )},
\end{equation*} 
and hence $m(A)< (\eta\tilde \b(1+\b)^{n-1}K)\inv m(U_{R})$.
By (T), (SC)  and (\ref{mrj}),   
\begin{equation*} 
  m(U_{R}) \le c_0    m_0(R) \le c_0 a^{-k_0} m_0(r_0) = c_0 a^{-k_0}(1+\b)^{n} m_0(r).
\end{equation*} 
By   (\ref{def-K}), we therefore conclude that
\begin{equation}\label{cmA}
2c   m(A) <  \frac{2c  m(U_{R})}{\eta\tilde \b (1+\b)^{n-1}K} \le  c_0\inv a^2 m_0(r) \le c_0\inv m_0(\a^2 r)\le   m(B'') .
\end{equation} 
Since   $              m(B'') \le c (m(A)+m(B''\setminus A))$, we
obtain that $  m(B''\setminus A)>(2c )\inv m(B'')$, where $ m(B'')\ge a
c_0^{-2} m(B')$, by (T) and (SC).
So there is  a closed set $F$ in $B''\setminus A$ 
with
\begin{equation*}
 m(F)> a (2cc_0^2)\inv    m(B').
\end{equation*}
Let
\[
 \nu:=\mu_x^{ B'\setminus F}.
\]
The   measure $\nu$ is supported by $F\cup (X\setminus B')$.
By (KS), we see that
\begin{equation}\label{nuF}
\nu(F) \ge \eta m(F)/m(B') > \eta a (2 cc_0^2)\inv  =2\b. 
\end{equation} 
Moreover, 
\begin{equation}\label{hhn}
h(x)=\int h\,d\nu \und \int_F h\,d\nu\le \tilde \b h(x) \nu(F)<\b h(x) \nu(F).
\end{equation} 
In particular, $\nu$ is not supported by $F$. 

Next we   claim that the function $H:=1_{B^c} h$ satisfies
$\mu_x^{B'} (H) \le \b h(x)$. 
Indeed, if~not,  then (HJ)  implies that, for every $y\in B''$,
\begin{equation*} 
h(y)=\mu_y^{B}(h) =\mu_y^{B}(H)\ge c_J\inv \mu_x^{B'}(H)> c_J\inv \b h(x)= \tilde \b h(x),
\end{equation*} 
contradicting the fact that $A\ne B''$, by (\ref{cmA}). Therefore, by (\ref{it-bal-1}),
\begin{equation*} 
\int_{  B^c} h\,d\nu=\nu(H) 
\le \mu_x^{B'}(H)\le \b h(x).
\end{equation*} 

Defining $b:=\sup h(B)$ we have
\begin{equation*} 
\int_{B\setminus B'}  h\,d\nu\le b\nu(B\setminus B') \le b (1-\nu(F))
\end{equation*} 
(where we used that $\|\nu\|\le 1$). Hence, by  (\ref{hhn}),
\begin{equation*} 
h(x)=  \int_F h\,d\nu   +\int_{X\setminus B'} h \,d\nu   
\le  \b h(x)\nu(F)+\b h(x)+ b(1-\nu(F)).
\end{equation*} 
Since  $\nu(F)> 2\b$, by (\ref{nuF}), we thus conclude that
\begin{equation*} 
      b \ge \frac{1- \b-  \b \nu(F)}{1-\nu(F)} \, h(x)>  (1+\b)h(x) > (1+\b)^n K
\end{equation*} 
completing the proof (we have $(1+\b)(1-\nu(F))=1+\b-\nu(F)-\b\nu(F) $).
\end{proof}

\begin{proof}[Proof of Theorem \ref{m-harnack}]     
Let    $h\in \H_b^+(U(x_0,R))$ and $\g:=\inf h(U(x_0,\a r))$. 
If 
\begin{equation*} 
\sup h(U(x_0,\a r))\le K\g
\end{equation*} 
 does not hold, then there exists 
a point   $x_1\in U(x_0,\a R)$ such that $h(x_1)>K\g$.
 Proposition~\ref{main-lemma} and  (\ref{sum-rj}) then recursively lead to
 points $x_2,x_3,\dots$ in  $U(x_0, 2 \a R)$  satisfying 
 $ h(x_n)>(1+\b)^{n-1} h(x)$ for every $n\ge 2$.  This contradicts the boundedness of~$h$.
 \end{proof}

\section{Green function and   Harnack inequalities}\label{Green-Harnack}

In this section, we assume that
for \emph{all} open sets $U$ in  the separable metric space~$(X,\rho)$,  
we have measures $\mu_x^U\in \M(X)$, $x\in X$, such that (M$_0$)
and (M$_1$) hold (see Section~\ref{setting}). 

Let $G\colon X\times X\to (0,\infty]$ be a Borel measurable function 
and, for $\mu\in \M(X)$, let 
\begin{equation*}
G\mu(x):=\int G(x,y)\,d\mu(y), \qquad x\in X.
\end{equation*} 
For every $A\in \B^\ast(X)$, let
  \begin{equation*} 
       \kap A:=\sup\{\|\mu\|\colon \mu\in \M(X), \ \mu(A^c)=0,\
        G\mu\le 1\}. 
\end{equation*} 
Clearly,
\begin{equation*}
\kap A =\sup\{\kap  F \colon F\mbox{ closed set, } F\subset A\},
\end{equation*} 
and the mapping $A\mapsto \kap A $ is increasing and subadditive.
So, in our terminology, $m:=\kap$ is a~quasi-capacity (with constant $1$) on $X$.

As in Section \ref{setting}, we suppose that $X_0$ is an open set in $X$ and $\U(X_0) $
denotes the set of all open sets $U$ with $\ov U\subset X_0$.
For every closed set $A$ in $X$, let
\begin{equation*}
                              \vx^A:=\mu_x^{X\setminus A}.
\end{equation*} 

\begin{remark} {\rm 
For a right process $\mathfrak X$ (see Example \ref{hunt-bal},1), $\vx^A$ is given by
\begin{equation*}
\vx^A(E)=\mathbbm P^x[X_{D_A}\in E],\qquad E\in \B(X),
\end{equation*} 
where $D_A:=\inf \{t\ge 0\colon X_t\in A\}$ denotes the time of the first entry into $A$.
For a balayage space $(X,\W)$ with $1\in \W$ (see Example \ref{hunt-bal},2), the measure $\vx^A$ is the reduced
measure for $x$ and $A$ (see \cite[VI, p.\,67]{BH}).
}
\end{remark}

Let us consider the  following properties.  
{\it 
\begin{itemize} 
\item [\rm (G$_1$)] 
There exists $c_1\ge 1$ such that, for all $U\in \U(X_0)$, $x\in U$, $\delta>0$, 
the following holds: For every closed set~$A\subset U$, 
there exists  a~closed neighborhood $B\subset U$ of~$A$ and
 a~measure $\nu$ on $B$ such that 
\begin{equation}\label{ABnu}
                                 \|\vx^B\|-\delta< c_1\,\|\vx^A\| \und
            \|\vy^A\|\le G\nu(y)\le c_1\,\|\vy^B\| , \ \ y\in X.
\end{equation} 
\item [\rm (G$_2)$] 
There are 
  a~strictly decreasing continuous  function $g\colon
[0,\infty)\to (0,\infty]$ and constants $c,c_D, M_0\in [1,\infty)$,
$\a_0\in (0,1)$  such that, for every $r>0$,
\begin{equation*} 
g(r/2)\le c_D g(r),\quad   M_0 g(r)\le  g(\a_0 r) \und 
 c\inv g\circ \rho\,  \le\,  G \, \le\,  c\, g\circ \rho.
\end{equation*} 
\item[\rm ($G_3$)]   
There exists $c_2\ge 1$ such that, for $x\in X_0$ and $0<r< R_0(x)$, 
\begin{equation}\label{cap-V}
                            \kap U(x,r)\ge c_2\inv g(r)\inv. 
\end{equation} 
\end{itemize} 
}

Let us first note that,  having  (G$_2$),  for every $M\ge 1$, there exists $0<\a_M<1/4$ such that 
\begin{equation}\label{alpha-eta}
                               Mg(r)\le   g(\a_Mr)      \qquad\mbox{ for every }r>0.
\end{equation} 
Indeed,  it suffices to choose $k\in\nat$ with $M\le M_0^k $, $\a_0^k<1/4$, and to take $\a_M:=\a_0^k$.

\begin{remark}{\rm      
Of course, (G$_1$) holds in Example \ref{hunt-bal},2, if
the functions $G(\cdot,y)$ are potentials on~$X$ with superharmonic support~$\{y\}$ and, 
for every continuous real potential~$p$ on~$X$,
there exists a measure~$\nu$ on~$X$ such that $p=G\nu$ (cf.\ \cite{HN-rep-potential}). 
Indeed, let $A$ be a closed set, $A\subset U\in \U(X_0)$ and $\delta>0$. Then,
by \cite[VI.1.2]{BH}, there exists a closed neighborhood $B\subset U$ of $A$ such that 
$\|\vx^B\|-\delta<\|\vx^A\|$. Let $\vp\in \C(X)$, $1_A\le \vp\le 1_B$ and 
\begin{equation*}
 p:=R_\vp:=\inf\{w\in \W\colon w\ge \vp\}.
\end{equation*} 
Then $p$ is a continuous real potential which is harmonic on $X\setminus B$.
Hence there exists a measure $\nu$, which is supported by $B$, such that $p=G\nu$,
and we have $    \|\vy^A\|\le G\nu(y)\le \|\vy^B\|$ for every $ y\in X$.
}
\end{remark}

\begin{proposition}\label{rho-triangle}
Property {\rm (G$_2$)} implies the following. 
\begin{itemize}
\item For every $x\in X$,   $G(x,x)=\lim_{y\to x} G(y,x)=\infty$.
\item The function $G$  has  the \emph{triangle property}: There exists $C\ge 1$, such that 
\begin{equation}\label{triangle}
         G(x,z)\wedge G(y,z)  \le C G(x,y), \qquad x,y,z\in X.
\end{equation} 
\item For all $x\in X$ and neighborhoods $V$ of $x$, $G(\cdot,x)$
is bounded on $V^c$.
\end{itemize} 
\end{proposition}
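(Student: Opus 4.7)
The plan is to deduce all three items directly from the sandwich $c\inv g\circ\rho\le G\le c\,g\circ\rho$ in (G$_2$), together with the scaling and monotonicity properties of $g$.

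\textbf{First bullet.} I would first show $g(0)=\infty$. By (\ref{alpha-eta}), for every $M\ge 1$ one has $g(\a_M r)\ge M g(r)$; fixing, say, $r=1$ and letting $M\to\infty$ gives $\limsup_{s\downarrow 0} g(s)=\infty$, and since $g$ is decreasing this lim sup is actually a limit. Continuity of $g$ at $0$ then forces $g(0)=\infty$. The first bullet is now immediate: $G(x,x)\ge c\inv g(0)=\infty$, and $G(y,x)\ge c\inv g(\rho(x,y))\to\infty$ as $y\to x$.

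\textbf{Second bullet.} The key observation is that the ordinary triangle inequality for $\rho$ yields $\max(\rho(x,z),\rho(y,z))\ge\rho(x,y)/2$, so since $g$ is decreasing,
\[
G(x,z)\wedge G(y,z)\le c\,g\bigl(\max(\rho(x,z),\rho(y,z))\bigr)\le c\,g(\rho(x,y)/2).
\]
Applying the doubling estimate $g(r/2)\le c_D g(r)$ and the lower bound $G(x,y)\ge c\inv g(\rho(x,y))$ gives the triangle property with $C:=c^2 c_D$.

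\textbf{Third bullet.} Strict monotonicity of $g$ forbids the value $\infty$ on $(0,\infty)$: if one had $g(r_1)=g(r_2)=\infty$ for some $0<r_1<r_2$, strict decrease would be violated. Given $x$ and a neighborhood $V$ of $x$, choose $\ve>0$ with $U(x,\ve)\subset V$; for $y\in V^c$ we have $\rho(x,y)\ge\ve$, hence $G(y,x)\le c\,g(\ve)<\infty$.

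The only step requiring actual attention is the extraction of $g(0)=\infty$ from the scaling hypothesis; the remaining two bullets are one-line consequences of the two-sided comparison $c\inv g\circ\rho\le G\le c\,g\circ\rho$ combined with, respectively, the doubling of $g$ and its strict monotonicity.
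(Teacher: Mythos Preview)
Your proof is correct and follows essentially the same route as the paper's: both extract $\lim_{r\to 0}g(r)=\infty$ from (\ref{alpha-eta}), both deduce the triangle property from $\max(\rho(x,z),\rho(y,z))\ge\rho(x,y)/2$ together with doubling (yielding the same constant $C=c^2c_D$), and both handle the third bullet via $G(\cdot,x)\le c\,g(r)$ outside $U(x,r)$. Your added justification that $g$ is finite on $(0,\infty)$ is a small bonus the paper leaves implicit.
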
 

\begin{proof} By (\ref{alpha-eta}), $\lim_{r\to 0} g(r)=\infty$. Hence the inequality $c\inv g\circ\rho\le G$
implies that $G(x,x)=\lim_{y\to x} G(y,x)=\infty$. 
    Moreover, if $x,y,z \in X$, then $\rho(x,z)\ge \rho(x,y)/2$ or  $\rho(z,y)\ge \rho(x,y)/2$.
So $G(x,z)\le c^2 c_D G(x,y)$ or $G(y,z)\le c^2 c_D G(x,y)$.
Finally, the last property is satisfied, since $G(y,x)\le c g(r)$ if $\rho(y,x)\ge r$. 
\end{proof} 

\begin{remark}{\rm
In Section \ref{section-intrinsic}, we shall see that, conversely, the properties of $G$ 
stated in Proposition \ref{rho-triangle}  enable the construction of  a~metric $\tilde \rho$ satisfying (G$_2$)
with $\tilde g(r)=r^{-\g}$ for some $\g\ge 1$. 
}
\end{remark}

\begin{lemma}\label{AnuB} 
\begin{itemize}
\item[\rm 1.]  For every $r>0$, $\kap U(x,r) \le c g(r)\inv $.
\item[\rm 2.] If  $A\in \B^\ast(X)$, $\nu\in \M(X)$ and  $G\nu\ge 1$ on $A$, then  $\kap A\le c^2 \|\nu\|$.  
\end{itemize} 
\end{lemma}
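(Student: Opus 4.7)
For part 1, the plan is to test the definition of $\kap$ at the single point $x$ which is the center of the ball. Let $\mu\in\M(X)$ with $\mu(U(x,r)^c)=0$ and $G\mu\le 1$. Then $\mu$ is supported in $U(x,r)$, so every $y$ in its support satisfies $\rho(x,y)<r$. The lower bound $G\ge c\inv g\circ\rho$ from (G$_2$), combined with the fact that $g$ is decreasing, gives $G(x,y)\ge c\inv g(r)$ throughout that support, so
$$1\ge G\mu(x)=\int G(x,y)\,d\mu(y)\ge c\inv g(r)\|\mu\|,$$
that is, $\|\mu\|\le c\,g(r)\inv$. Passing to the supremum over all admissible $\mu$ yields (1).

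For part 2, the plan is the standard dual bound, exploiting the near-symmetry of $G$. Fix $\mu\in\M(X)$ with $\mu(A^c)=0$ and $G\mu\le 1$. Since $G\nu\ge 1$ on $A$,
$$\|\mu\|=\int_A 1\,d\mu\le \int G\nu\,d\mu=\iint G(x,y)\,d\nu(y)\,d\mu(x).$$
Because $G$ is nonnegative and Borel measurable and $\mu,\nu$ are finite, Tonelli lets me swap the order of integration. To convert the inner integral back into a value of $G\mu$, I use that $\rho$ is symmetric: from (G$_2$),
$$G(x,y)\le c\,g(\rho(x,y))=c\,g(\rho(y,x))\le c^2 G(y,x),$$
and therefore
$$\int G(x,y)\,d\mu(x)\le c^2\int G(y,x)\,d\mu(x)=c^2 G\mu(y)\le c^2$$
for every $y\in X$. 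Integrating against $\nu$ gives $\|\mu\|\le c^2\|\nu\|$, and taking the supremum over $\mu$ gives $\kap A\le c^2\|\nu\|$.

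There is no real obstacle here: both parts reduce to pointwise inequalities coming directly from (G$_2$). The only mildly delicate step is the interchange of integrals in part 2, but since $G$ is a nonnegative Borel function and $\mu$, $\nu$ are finite, Tonelli applies without further ado; and the two-sided comparison of $G$ with $g\circ\rho$ is precisely what makes $G(x,y)$ and $G(y,x)$ comparable, which is what the argument needs.
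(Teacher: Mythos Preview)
Your proof is correct and follows essentially the same approach as the paper: evaluate $G\mu$ at the center $x$ for part 1, and for part 2 use $\|\mu\|\le \int G\nu\,d\mu$, Tonelli, and the near-symmetry $G(x,y)\le c^2 G(y,x)$ coming from $(G_2)$ to bound this by $c^2\int G\mu\,d\nu\le c^2\|\nu\|$. The paper's write-up is simply more compressed.
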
 

\begin{proof} Let $\mu\in \M(X)$, $G\mu\le 1$.
 If $\mu(U(x,r)^c) =0$, then 
\begin{equation*} 
 c\inv g(r) \|\mu\|\le \int G(x,y)\,d\mu(y)=G\mu(x)\le 1.
\end{equation*} 
If $A\in \B^\ast(X)$, $\mu(A^c)=0$ and $\nu\in\M(X)$ with $G\nu\ge1$ on $A$, then 
\begin{equation*} 
  \|\mu\|\le \int G\nu \,d\mu\le c^2 \int G\mu\,d\nu\le c^2 \|\nu\|.
\end{equation*} 
\end{proof}

Let us introduce the following property.
{\it
\begin{itemize}
\item[\rm ($\ov G_3$) ] 
There 
 exists    $c_2\ge 1$ such that, for  $x\in X_0$ and $0<r<  R_0(x)$,
\begin{equation}\label{RV}
    \|\vy^{\ov{U(x,r)}}\|\ge c_2\inv  g(r)\inv G(y,x), \qquad y\in U(x,r)^c.
\end{equation} 
\end{itemize} 
}
The next proposition  is of independent interest,
since assuming that we have a~$\mathcal P$-harmonic space where $1$ is superharmonic 
(that is, a balayage space  $(X,\W)$ with $1\in \W$, where the harmonic measures $\mu_x^U$
are supported by the boundary of~$U$), and $G(\cdot,x)$ is a potential which is harmonic
on the complement of $\{x\}$,  we trivially have
\begin{equation*} 
                                \bigl  \|\vy^{\ov{U(x,r)}} \bigr \|\ge c\inv g(r)\inv  G(y,x)  ,\qquad y\in U(x,r)^c.
\end{equation*} 
Its second part will be used in Section \ref{section-intrinsic}. 

\begin{proposition}\label{amounts} Suppose that  {\rm(G$_1$)} and {\rm(G$_2$)} hold. 
\begin{itemize} 
\item[\rm 1.]
 Property {\rm (G$_3$)} implies {\rm($\ov G_3$)}.
\item[\rm 2.]
Suppose that $X_0\ne X$ and {\rm ($\ov G_3$)} holds. Then {\rm (G$_3$)} holds.
\end{itemize} 
\end{proposition}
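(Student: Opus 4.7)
For Part 1, fix $x\in X_0$, $0<r<R_0(x)$, and $y\in U(x,r)^c$; set $A:=\ov{U(x,r/2)}$ and $U:=U(x,r)$, so $A\subset U\in\U(X_0)$. Applying (G$_1$) produces a closed neighborhood $B$ of $A$ with $B\subset U$ and a measure $\nu$ on $B$ such that $\|\ve_z^A\|\le G\nu(z)\le c_1\|\ve_z^B\|$ for every $z\in X$. Set-monotonicity of balayage (see (\ref{monoto})) together with $B\subset\ov{U(x,r)}$ yield $\|\vy^{\ov{U(x,r)}}\|\ge\|\vy^B\|\ge c_1\inv G\nu(y)$, so the task reduces to a lower bound for $G\nu(y)$. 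For every $z\in B$ we have $\rho(y,z)\le\rho(y,x)+r\le 2\rho(y,x)$, and (G$_2$) (with the doubling $g(s/2)\le c_Dg(s)$ and the two-sided comparison $c\inv g\circ\rho\le G\le c\,g\circ\rho$) delivers $G(y,z)\ge c^{-2}c_D\inv G(y,x)$; hence $G\nu(y)\ge c^{-2}c_D\inv G(y,x)\|\nu\|$. It remains to bound $\|\nu\|$ from below: by (M$_0$), $\|\ve_z^A\|=1$ on $A$, so $G\nu\ge 1$ on $A$, and Lemma \ref{AnuB}.2 combined with (G$_3$) applied to $U(x,r/2)$ and the doubling of $g$ gives $\|\nu\|\ge c^{-2}\kap A\ge c^{-2}c_2\inv c_D\inv g(r)\inv$, which closes Part 1.

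For Part 2, I reverse the strategy: (G$_1$) now supplies a test measure for $\kap U(x,r)$. Pick $r'\in(r/2,\min(r,R_0(x)/2))$ — non-empty because $r<R_0(x)$ — and apply (G$_1$) with $U:=U(x,r')$ and $A:=\ov{U(x,r/2)}$. The bound $G\nu\le c_1\|\ve_z^B\|\le c_1$ on $X$, together with $B\subset U(x,r')\subset U(x,r)$, shows that $\mu:=c_1\inv\nu$ is admissible in the supremum defining $\kap U(x,r)$, giving $\kap U(x,r)\ge c_1\inv\|\nu\|$. To bound $\|\nu\|$ from below, the hypothesis $X_0\ne X$ allows me to pick $y_0\in X\sms X_0$; by the definition of $R_0$, $\rho(y_0,x)\ge R_0(x)>2r'$, so $\rho(y_0,z)\ge\rho(y_0,x)/2$ for every $z\in B$, and doubling in (G$_2$) yields $G(y_0,z)\le c^2c_DG(y_0,x)$, hence $G\nu(y_0)\le c^2c_DG(y_0,x)\|\nu\|$. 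On the other hand, since $y_0\notin\ov{U(x,r/2)}\subset X_0$, property ($\ov G_3$) at radius $r/2$ applies and (with the estimate $\|\ve_{y_0}^A\|\le G\nu(y_0)$ from (G$_1$)) gives $G\nu(y_0)\ge c_2\inv g(r/2)\inv G(y_0,x)$. The positivity and finiteness of $G(y_0,x)$ guaranteed by Proposition \ref{rho-triangle} allow division, producing $\|\nu\|\ge c^{-2}c_D^{-2}c_2\inv g(r)\inv$ and hence (G$_3$).

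The genuinely delicate point in both directions is the geometric calibration that keeps $G(y,z)$ comparable to $G(y,x)$ (and, in Part 2, $G(y_0,z)$ comparable to $G(y_0,x)$): this is what forces the intermediate radii $r/2$ and $r'$, and in Part 2 the hypothesis $X_0\ne X$ enters precisely to supply a comparison point $y_0$ at distance at least $R_0(x)$ from $x$, so that the inequality $\rho(y_0,z)\ge\rho(y_0,x)/2$ holds uniformly on the support of $\nu$. Once the radii are chosen so that this comparability is absolute, the rest is routine constant-chasing using only the doubling and the two-sided bound in (G$_2$) together with Lemma \ref{AnuB}.2.
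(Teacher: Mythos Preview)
Your proof is correct and follows essentially the same approach as the paper's: in both directions you use (G$_1$) to produce a measure $\nu$ supported near $x$, then exploit the two-sided bound $c\inv g\circ\rho\le G\le c\,g\circ\rho$ and doubling to compare $G\nu$ at a distant point with $G(\cdot,x)\|\nu\|$, closing the loop via Lemma~\ref{AnuB}.2 (for Part~1) or the admissibility of $c_1\inv\nu$ in the definition of $\kap$ (for Part~2). The only cosmetic differences are that the paper uses $A=\ov{U(x,r/4)}$ and $B\subset U(x,r/2)$ uniformly for both parts, and in Part~2 picks the comparison point merely in $U(x,r)^c$ rather than in $X\setminus X_0$; your choice of $y_0\in X\setminus X_0$ and the auxiliary radius $r'$ are slightly more elaborate but equally valid.
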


\begin{proof}
 Let $x\in X_0$, $0<r< R_0(x)$, $z\in U(x,r)^c$ (such a point exists if $X_0\ne X$). 
 By (G$_1$), there exists a~closed neighborhood~$B$ of~$A:={\ov{ U(x,r/4)}}$ 
in~$U(x,r/2)$ and a~measure $\nu$ on  $B$ such that 
\begin{equation*}
                      \|\vy^A\|\le G\nu(y)\le c_1\,\|\vy^B\|\qquad \mbox{ for every } y\in X.
\end{equation*} 
Then $\|\nu\|\le c_1\kap U(x,r)$ and, by Lemma \ref{AnuB},2,  $\kap A\le c^2 \|\nu\|$.
Let $s:=\rho(x,z)$. Obviously,  $s/2\le \rho(z,\cdot)\le 2s$ on $B$, and hence
\begin{equation*}
c g(s/2) \|\nu\|\ge G\nu(z) \ge c\inv g(2s) \|\nu\| \ge  (c^2 c_D)\inv G(z,x) \|\nu\|.
\end{equation*} 

1. Assuming (G$_3$), clearly
$
\kap A\ge \kap U(x,r/4)\ge c_2\inv g(r/4)\inv\ge (c_D^2c_2)\inv g(r)\inv,
$
and therefore, using also (\ref{monoto}),
\begin{equation*} 
   c_1 \|\vz^{\ov{U(x,r)}}\| \ge
   c_1 \|\vz^B\|\ge G\nu(z) \ge (c^2 c_D)\inv G(z,x) \|\nu\|\ge 
   (c^4c_D^3c_2)\inv  g(r) \inv G(z,x) . 
\end{equation*} 

2. Assuming ($\ov G_3$),  we have 
 $\|\vz^A\|\ge c_2\inv g(r/4)\inv G(z,x)\ge (c c_D^2c_2)\inv g(r)\inv g(s)$, where
\begin{equation*} 
  \|\vz^A\|\le       G\nu(z)\le c g(s/2)\|\nu\|\le c c_D  c_1g(s)\kap U(x,r),
\end{equation*} 
and hence $\kap U(x,r)\ge (c^2 c_D^3 c_1c_2)\inv  g(r)\inv$.
\end{proof}

\begin{remark}{\rm
The proof of  (2) shows that $(G_3)$ already holds if,
given $x\in X_0$ and  $0<r<R_0(x)/4$,  (\ref{RV}) is satisfied for just one point $y\in U(x,4r)^c$. 
}
\end{remark}

From now on let us assume in this section that (G$_1$), (G$_2$), (G$_3$) hold,
\begin{equation}\label{a-for-KS}
  M:= (2 c_D^2c^2c_1^2) \vee (3c c_2) \und     0<\a\le \a_M 
\end{equation} 
so that $M g(r)\le g(\a r)$ for every $r>0$.   
We intend to prove that we have the properties (T), (SC) and (KS) taking
$m(A):=\kap A$ and $m_0(r):=g(r)\inv$.
 Of course, (T) follows from Lemma \ref{AnuB},1 and (G$_3$). 
If $k\in\nat$ such that $2^{-k}\le \a$, then (SC) holds with $a:=c_D^{-k}$
by the doubling property of $g$. To get (KS)  we first note the following.

\begin{lemma}\label{subset-U}
Let $U$ be an open set in $X$ and $A\subset U$ be  a~closed set.
Then 
\begin{equation*}
\vx^{A\cup U^c}(U)\ge \|\vx^A\|-\sup\nolimits_{z\in U^c}  \|\vz^A\| \qquad\mbox{ for every }x\in X.
\end{equation*} 
\end{lemma}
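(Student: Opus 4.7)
The plan is to express $\vx^{A\cup U^c}$ as a harmonic measure for an open set so that the iteration property (M$_1$) can be applied. Write $V := U\setminus A$, which is open, so $\vx^{A\cup U^c} = \mu_x^V$ (since $X\setminus(A\cup U^c) = V$), and this measure is supported on $V^c = A \cup U^c$ with $\|\mu_x^V\|\le 1$ by (M$_0$). Because $A\subset U$, the sets $V$ and $A$ partition $U$, so $\mu_x^V(U) = \mu_x^V(A) + \mu_x^V(V) = \mu_x^V(A)$; thus it suffices to bound $\mu_x^V(A)$ from below.

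Next I would apply (\ref{it-bal-1}) with the larger set $U' = A^c$ and the smaller set $V$ (note $V\subset A^c$ because $V$ and $A$ are disjoint). This gives
\begin{equation*}
\mu_x^{A^c} = \mu_x^V|_A + \int_{A^c}\mu_y^{A^c}\,d\mu_x^V(y).
\end{equation*}
Since $\mu_x^{A^c} = \vx^A$ and $\mu_y^{A^c} = \vy^A$ are supported on $A$ by (M$_0$), evaluating total mass gives the key identity
\begin{equation*}
\|\vx^A\| = \mu_x^V(A) + \int_{A^c}\|\vy^A\|\,d\mu_x^V(y).
\end{equation*}

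It remains to estimate the integral. On $A^c$, the support constraint on $\mu_x^V$ forces $y\in V^c\cap A^c = U^c$ (using again that $A\subset U$, so $U^c\cap A = \emptyset$). Hence
\begin{equation*}
\int_{A^c}\|\vy^A\|\,d\mu_x^V(y) \le \sup_{z\in U^c}\|\vz^A\|\cdot \mu_x^V(U^c) \le \sup_{z\in U^c}\|\vz^A\|,
\end{equation*}
because $\mu_x^V(U^c)\le \|\mu_x^V\|\le 1$. Rearranging the displayed identity then yields $\mu_x^V(A)\ge \|\vx^A\| - \sup_{z\in U^c}\|\vz^A\|$, which is the claim.

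The only real obstacle is bookkeeping: making sure that the smaller/larger sets in (\ref{it-bal-1}) are chosen correctly (so that $V\subset A^c$, not the reverse) and that the supports of the various reduced measures are tracked carefully so that the restriction to $A^c$ in the integral is actually concentrated in $U^c$. Once that is in place, the calculation is a one-line consequence of (M$_0$) and (\ref{it-bal-1}).
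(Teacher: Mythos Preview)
Your proof is correct and follows essentially the same approach as the paper: both apply the iteration property with the pair $V=U\setminus A\subset A^c$, split the resulting identity for $\|\vx^A\|$ into the $A$-part (which gives $\vx^{A\cup U^c}(A)=\vx^{A\cup U^c}(U)$) and the $U^c$-part (bounded by the supremum times $\vx^{A\cup U^c}(U^c)\le 1$). The only cosmetic difference is that the paper quotes (\ref{it-bal}) directly and uses $\vz^A=\vz$ for $z\in A$, whereas you invoke the equivalent reformulation (\ref{it-bal-1}), which has already separated off the $A$-part as $\mu_x^V|_A$.
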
 

\begin{proof} Since $\vx^{A\cup U^c}$ is supported by $A\cup U^c$, we know,  by~(\ref{it-bal}), that
\begin{equation*} 
\|\vx^A\|=\int_{A\cup U^c} \|\vz^A\|\,d\vx^{A\cup U^c}(z).
\end{equation*} 
The proof is completed observing 
that $\vz^A =\vz$, $z\in A$, and $\vx^{A\cup U^c}(U^c)\le 1$. 
\end{proof}

The next proposition establishes    (KS)  
even with $m(U(x, \a r))$ in place of $m(U(x,r))$                         
in (\ref{Kry-Saf}) (where we may remember that, in Example \ref{hunt-bal},1,
we have $\vx^{A\cup U(x_0,  r)^c }(A)=\mathbbm P^x[D_A<\tau_{U(x_0,r)}]$).

\begin{proposition}\label{hit-A}
Let          
$\eta:=(2c_Dc^3c_1^2c_2)\inv$, 
 $U(x_0,r) \in \Uo$,  $x\in U(x_0,\a r)$  and   $A\subset U(x_0,\a r)$ be closed. Then
\begin{equation}\label{hitting-est}
\vx^{A\cup U(x_0,r)^c}  (A)\ge  \eta\,\, \frac {\kap A} {\kap U(x_0,\a r)}\,.
\end{equation} 
\end{proposition}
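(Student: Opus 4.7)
Let $V := U(x_0, \alpha r)$ and $V' := U(x_0, r)$, so $A \subset V \subset V'$. Since $\vx^{A \cup (V')^c}$ is supported in $A \cup (V')^c$ and $A \subset V'$, its mass on $V'$ coincides with its mass on $A$, so Lemma~\ref{subset-U} applied with $U = V'$ gives
\[
\vx^{A \cup (V')^c}(A) \ge \|\vx^A\| - \sup\nolimits_{z \in (V')^c} \|\vz^A\|.
\]
The plan is to show that the subtracted supremum is at most $\tfrac12\|\vx^A\|$ and that $\|\vx^A\| \ge 2\eta\,\kap A/\kap V$; both bounds will flow from a single potential-theoretic approximation of $A$.

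The first step is to fix a small $\delta > 0$ and apply~(G$_1$) with $U = V$, obtaining a closed neighborhood $B \subset V$ of $A$ and a measure $\nu$ on $B$ with $\|\vx^B\| - \delta < c_1\|\vx^A\|$ and $\|\vy^A\| \le G\nu(y) \le c_1\|\vy^B\|$ for every $y \in X$. Since $\|\vy^A\| = 1$ on $A$ by (M$_0$), Lemma~\ref{AnuB},2 delivers the key bound $\|\nu\| \ge c^{-2}\kap A$, while chaining the two (G$_1$) inequalities at $y = x$ gives the reverse relation $G\nu(x) \le c_1^2\|\vx^A\| + c_1\delta$.

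The remaining work is a pair of geometric comparisons against $G\nu$. For $y \in B \subset V$ one has $\rho(x,y) < 2\alpha r$, so (G$_2$) gives $G(x,y) \ge c^{-1} g(2\alpha r) \ge (cc_D)\inv g(\alpha r)$ and hence $G\nu(x) \ge (cc_D)\inv g(\alpha r)\|\nu\|$; inserting $\|\nu\| \ge c^{-2}\kap A$ and $g(\alpha r) \ge (c_2\,\kap V)\inv$ from~(G$_3$), and letting $\delta \to 0$, this yields $\|\vx^A\| \ge (c^3 c_1^2 c_D c_2)\inv \kap A/\kap V$. Symmetrically, for $z \in (V')^c$ every $y \in B$ satisfies $\rho(z,y) > (1-\alpha)r > r/2$, so $G(z,y) \le c g(r/2) \le cc_D g(r)$ and $\|\vz^A\| \le G\nu(z) \le cc_D g(r)\|\nu\|$. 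Dividing this upper bound by the lower bound for $G\nu(x)$ produces the ratio $c^2c_D^2\, g(r)/g(\alpha r) \le c^2c_D^2/M$, and invoking the standing assumption $M \ge 2c^2 c_1^2 c_D^2$ from~(\ref{a-for-KS}) together with $G\nu(x) \le c_1^2\|\vx^A\| + c_1\delta$ then gives $\sup_z\|\vz^A\| \le \tfrac12\|\vx^A\|$ in the limit $\delta \to 0$.

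Combining the two bounds, $\vx^{A \cup (V')^c}(A) \ge \tfrac12\|\vx^A\| \ge (2c^3 c_1^2 c_D c_2)\inv \kap A/\kap V = \eta\,\kap A/\kap V$, as required. The delicate point, and the reason $M$ has to be taken so large in~(\ref{a-for-KS}), is that $\|\vx^A\|$ and $\sup_z\|\vz^A\|$ are both controlled through the same auxiliary potential $G\nu$: the scale separation $g(r)/g(\alpha r) \le 1/M$ secured by the choice $\alpha \le \alpha_M$ translates directly into the one-half domination after one absorbs the distortion constant $c^2 c_1^2 c_D^2$ coming from~(G$_1$).
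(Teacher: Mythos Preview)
Your proof is correct and follows essentially the same route as the paper: both apply (G$_1$) to obtain the auxiliary potential $G\nu$, bound $G\nu(x)$ from below and $G\nu(z)$ from above via (G$_2$), convert $\|\nu\|$ into $\kap A$ via Lemma~\ref{AnuB},2, and finish with Lemma~\ref{subset-U} and (G$_3$). The only cosmetic difference is that the paper subtracts the two estimates directly to bound $\|\vx^A\|-\|\vz^A\|$ in one step, whereas you first isolate the lower bound $\|\vx^A\|\ge 2\eta\,\kap A/\kap V$ and then separately show $\sup_z\|\vz^A\|\le \tfrac12\|\vx^A\|$; the constants and the use of $M\ge 2c^2c_1^2c_D^2$ match exactly.
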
 

\begin{proof} Let $\delta>0$ and $z\in U(x_0,r)^c$. 
  By~(G$_1$), 
there are a closed  neighborhood~$B$ of~$A$, $B\subset U(x_0,\a r)$, 
and a~measure $\nu$ on $B$  such that 
\begin{equation*}
 \|\vx^B\|<c_1(\|\vx^A\|+\delta) \und \|\vy^A\|\le G\nu(y)\le c_1 \|\vy^B\|\mbox{ \ \ for every } y\in X.
\end{equation*} 
Since $\rho(x, \cdot) \le 2\a r$ on~$B$,  we obtain that
\begin{equation*} 
    c_1\inv \|\vx^{B}\|\ge c_1^{-2}\int G(x,\cdot)\,d\nu\ge   (cc_1^2)\inv g(2 \a r) \|\nu\| 
\ge (c_Dcc_1^2)\inv g(  \a r) \|\nu\|.
    \end{equation*} 
Further, $\rho(z,\cdot)\ge r/2$ on $B$ and $ c g(r/2)\le c_D cg(r) \le (2c_Dc c_1^2 )\inv  g(\a r)$,
by (\ref{a-for-KS}). So
\begin{equation*} 
\|\vz^A\|\le \int G(z,\cdot)\,d\nu\le c g(r/2)\|\nu\|\le  (2c_Dcc_1^2)\inv   g(\a r)\|\nu\|.
\end{equation*} 
Combining these two estimates we see that
\begin{equation*} 
       \|\vx^A\| +\delta - \|\vz^A\| \ge  c_1\inv \|\vx^B\|-\|\vz^A\|\ge  (2c_Dcc_1^2)\inv     g( \a r)  \|\nu\|. 
\end{equation*} 
Therefore,  by Lemma  \ref{subset-U}, Lemma \ref{AnuB},2 and  (\ref{cap-V}),
\begin{equation*} 
\vx^{A\cup U(x_0,r)^c} (U(x_0,r))+\delta>  (2c_Dcc_1^2)\inv  g ( \a r ) \|\nu\| \ge \eta\kap A/\kap U(x_0,\a r). 
\end{equation*} 
Since the measure $\vx^{A\cup U(x_0,r)^c}$ is supported by $A\cup U(x_0,r)^c$, the proof is finished.
\end{proof}

By Theorem \ref{m-harnack}, we now obtain the following result.

\begin{theorem}\label{G-harnack}
Suppose that we have {\rm (HJ)} and  $\a\in (0,1)$ which satisfies {\rm (\ref{choice-a})} and {\rm (\ref{a-for-KS})}. 
Then there exists $K\ge 1$ such that, for all  $U(x,R)\in \Uo$, 
\begin{equation*}
     \sup h(U(x,\a R))\le K\inf h(U(x,\a R)), \qquad h\in \H^+(U(x,R)).
\end{equation*} 
\end{theorem}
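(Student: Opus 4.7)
The plan is to deduce Theorem \ref{G-harnack} by assembling the pieces already built up in Section \ref{Green-Harnack}: I would verify that the quasi-capacity $m := \kap$ paired with $m_0(r) := g(r)^{-1}$ fulfils the hypotheses (T), (SC) and (KS) of Section \ref{scaling-invariant}, then invoke Theorem \ref{m-harnack} (since (HJ) is assumed) to obtain (HI) for bounded positive harmonic functions, and finally upgrade to arbitrary positive harmonic functions by Proposition \ref{bounded-unbounded}(1).

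For property (T), the upper bound $\kap U(x,r)\le c\,g(r)^{-1}$ is exactly Lemma \ref{AnuB}(1), while the matching lower bound is precisely the content of (G$_3$); together they give $c_0^{-1} m_0(r)\le m(U(x,r))\le c_0\, m_0(r)$ with $c_0:= c\vee c_2$. For property (SC), I would use the doubling bound $g(r/2)\le c_Dg(r)$ from (G$_2$): picking $k\in\nat$ with $2^{-k}\le \a$ gives $g(\a r)\le c_D^k g(r)$, hence $m_0(\a r)\ge c_D^{-k} m_0(r)$, so one may take $a:=c_D^{-k}$; the continuity of $m_0$ and $\lim_{r\to 0} m_0(r)=0$ follow from the strict monotonicity of $g$ and $\lim_{r\to 0} g(r)=\infty$ established in Proposition \ref{rho-triangle}.

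The substantive input is (KS), and this is already carried out in Proposition \ref{hit-A} under the running hypothesis (\ref{a-for-KS}): for $x\in U(x_0,\a r)$ and closed $F\subset U(x_0,\a r)$ it gives
\[
\mu_x^{U(x_0,r)\setminus F}(F)=\vx^{F\cup U(x_0,r)^c}(F)\ge \eta\,\frac{\kap F}{\kap U(x_0,\a r)}\ge \eta\,\frac{\kap F}{\kap U(x_0,r)},
\]
the last step being monotonicity of $\kap$. This is precisely (KS) for our $m$ and $m_0$, with the constant $\eta=(2c_Dc^3c_1^2c_2)^{-1}$ supplied by Proposition \ref{hit-A}.

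With (T), (SC), (KS) and (HJ) all in place, and with $\a$ satisfying (\ref{choice-a}) and (\ref{a-for-KS}), Theorem \ref{m-harnack} yields (HI): there exists $K\ge 1$ such that $\sup h(U(x,\a R))\le K\inf h(U(x,\a R))$ for every $U(x,R)\in \Uo$ and every $h\in \H_b^+(U(x,R))$. Applying Proposition \ref{bounded-unbounded}(1) extends the same inequality, with the same $K$, to all $h\in \H^+(U(x,R))$. There is no real obstacle in this final step: the hard work was the Krylov-Safonov estimate of Proposition \ref{hit-A}, for which (G$_1$), (G$_2$), (G$_3$) and the choice of $\a$ in (\ref{a-for-KS}) were carefully calibrated; Theorem \ref{G-harnack} is the synthesis.
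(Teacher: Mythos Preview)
Your proposal is correct and follows exactly the paper's approach: the text preceding Theorem \ref{G-harnack} already verifies (T), (SC) and (KS) for $m=\kap$ and $m_0(r)=g(r)^{-1}$ via Lemma \ref{AnuB}(1), (G$_3$), the doubling of $g$, and Proposition \ref{hit-A}, so the paper simply writes ``By Theorem \ref{m-harnack}''. Your only addition is making explicit the passage from $\H_b^+$ to $\H^+$ through Proposition \ref{bounded-unbounded}(1), which the paper leaves tacit.
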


We recall that, by Proposition \ref{bounded-unbounded},  (HI) and  the continuity of  
all functions in~$\H_b^+(U)$, $U\in \U(X_0)$, imply the continuity
of all  functions  in  $\H^+(U)$, $U\in\U(X_0)$.
In fact, assuming that the constant function $1$ is harmonic on~$X$, \cite[Corollary~3.2]{H-hoelder}   implies 
even the H\"older continuity of all functions  in $\H_b^+(U)$, $U\in \U(X_0)$.   To see this  we only have to verify 
  property (J$_0$) in  \cite{H-hoelder}, that is,    the following.

\begin{proposition}\label{j0} 
There exists $\delta_0>0$ such that, for every $U(x,r)\in \Uo$, 
\begin{equation}\label{j0-formula}
\mu_x^{U(x,\a^2 r)}(U(x,r))>\delta_0.
\end{equation} 
\end{proposition}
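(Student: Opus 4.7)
The plan is to deduce~(\ref{j0-formula}) from the Krylov--Safonov-type hitting estimate of Proposition~\ref{hit-A}, by choosing a closed set $A$ inside the annulus $U(x,\a r) \setminus U(x,\a^2 r)$ whose capacity is comparable to $\kap U(x,\a r)$, and then pulling the resulting hitting estimate back to $\mu_x^{U(x,\a^2 r)}$ through the iteration identity~(\ref{it-bal}).

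Concretely, I would take $A := \overline{U(x,\rho)}\setminus U(x,\a^2 r)$ with $\rho := \a(1+\a)r/2$, so that $A$ is closed, contained in $U(x,\a r)$, and disjoint from $U(x,\a^2 r)$. I would then estimate $\kap A$ from below by subadditivity: using (G$_3$) and the doubling of $g$ one obtains $\kap\overline{U(x,\rho)} \ge c_2\inv c_D\inv g(\a r)\inv$, and using Lemma~\ref{AnuB}.1 together with $g(\a^2 r) \ge M g(\a r)$ (available from $\a \le \a_M$ and (G$_2$)) one obtains $\kap\overline{U(x,\a^2 r)} \le cM\inv g(\a r)\inv$. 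Provided $M$ is sufficiently large (taking $\a$ smaller than in~(\ref{a-for-KS}) if necessary), these two estimates combine to give a positive constant $c_{\ast}$ with $\kap A/\kap U(x,\a r) \ge c_{\ast}$.

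Proposition~\ref{hit-A}, applied with outer ball $U(x,r)$ and this set $A$, then gives $\mu_x^{U(x,r)\setminus A}(A) = \vx^{A \cup U(x,r)^c}(A) \ge \eta c_{\ast}$. Since $A$ is disjoint from $U(x,\a^2 r)$, we have $U(x,\a^2 r) \subset U(x,r)\setminus A$, and the iteration~(\ref{it-bal}) produces
\[
\mu_x^{U(x,r)\setminus A} = \int \mu_y^{U(x,r)\setminus A}\,d\mu_x^{U(x,\a^2 r)}(y).
\]
On the support $U(x,\a^2 r)^c$ of $\mu_x^{U(x,\a^2 r)}$, the integrand $\mu_y^{U(x,r)\setminus A}(A)$ equals $1$ on $A$, equals $0$ on $U(x,r)^c$ (because $A \subset U(x,r)$), and is at most $1$ on $U(x,r)\setminus A \cap U(x,\a^2 r)^c$, so the integral is dominated by $\mu_x^{U(x,\a^2 r)}(U(x,r)\cap U(x,\a^2 r)^c) = \mu_x^{U(x,\a^2 r)}(U(x,r))$. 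Combining the two bounds yields~(\ref{j0-formula}) with $\delta_0 := \eta c_{\ast}$.

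The main obstacle is the capacity comparison in the second step: one must ensure that $\kap\overline{U(x,\a^2 r)}$ is a controlled small fraction of $\kap\overline{U(x,\rho)}$, and in this abstract setting the only mechanism for this is the strong doubling of $g$ between the scales $\a r$ and $\a^2 r$, which is supplied by (G$_2$) together with the choice $\a \le \a_M$. Once this capacity gap is secured, the remainder is a routine combination of Proposition~\ref{hit-A} and~(\ref{it-bal}).
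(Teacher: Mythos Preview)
Your approach is essentially the paper's: lower-bound the capacity of an annulus between scales $\a^2 r$ and $\a r$ via subadditivity and the gap $g(\a^2 r)\ge M g(\a r)$, apply Proposition~\ref{hit-A}, then transfer the hitting bound to $\mu_x^{U(x,\a^2 r)}$ via iteration. The paper carries out the last step by invoking~(\ref{reverse}) directly (with $V=U(x,\a^2 r)\subset U(x,r)\setminus F$ and $A=U(x,r)$), which is exactly your unrolling of~(\ref{it-bal}).

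The one substantive difference is how the closed set in the annulus is produced. The paper works with the \emph{open} annulus $S=U(x,\a r)\setminus U(x,\a^2 r)$, gets $\kap S\ge c_2^{-1}g(\a r)^{-1}-c\,g(\a^2 r)^{-1}>(2c_2)^{-1}g(\a r)^{-1}$ straight from (G$_3$), Lemma~\ref{AnuB}.1 and $M\ge 3cc_2$, and only then passes to a closed $F\subset S$ by inner regularity of $\kap$. Your explicit closed annulus $\overline{U(x,\rho)}\setminus U(x,\a^2 r)$ with $\rho<\a r$ costs an extra factor $c_D$ in the lower bound, so the positivity of $\kap A$ needs $M>2cc_2c_D$, which the paper's fixed $M$ in~(\ref{a-for-KS}) does not guarantee. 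Your remedy (shrink $\a$) changes the $\a$ appearing in the statement; the cleaner fix is to imitate the paper and use inner regularity of $\kap$ on the open annulus, which removes the $c_D$ loss and lets the proposition stand for the $\a$ already chosen in~(\ref{a-for-KS}).
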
 

\begin{proof}   
 Let  $U(x,r)\in \Uo$ and $S:= {U(x,\a r)}\setminus U(x,\a^2 r)$. Then, by   (\ref{cap-V}),  Lemma \ref{AnuB},1 and
(\ref{a-for-KS}), 
\begin{eqnarray*} 
         \kap S&\ge& \kap {U(x,\a r)} -\kap U(x,\a^2 r)\\
                    &\ge& c_2\inv g(\a r)\inv -c g(\a^2r)\inv >(2c_2)\inv g(\a r)\inv.
\end{eqnarray*} 
So $\kap F>(2c_2)\inv g(\a r)\inv$ for some  closed set $F\subset S$.
By Proposition \ref{hit-A}, 
\begin{equation*} 
                   \mu_x^{U(x,r)\setminus F}(U(x,r)) =\vx^{F\cup U(x,r)^c}(U(x,r))\ge\eta\kap F/\kap U(x,\a r) \ge (2c c_2^2 )\inv \eta.
\end{equation*} 
To finish the proof we note that $\mu_x^{U(x,\a^2 r)}(U(x,r))\ge \mu_x^{U(x,r)\setminus F}(U(x,r))$,  by (\ref{reverse}). 
\end{proof} 

As already indicated, \cite[Corollary~3.2]{H-hoelder} now yields the following result.

\begin{theorem}\label{G-continuous}
Suppose that {\rm(HJ)} holds and $1\in \H(X)$.
Then there exist $\b\in (0,1)$ and $C\ge 1$
such that, for all $U(x,R)\in \Uo$,           
\begin{equation*}
          |h(y)-h(x)|\le C\|h\|_\infty \left(\frac{\rho(x,y)} R\right)^\b \quad\mbox{ for all } y\in U(x,R), \,h\in\H_b(U(x,R)).
\end{equation*} 
In particular, every universally measurable function,  which is harmonic on an open set $U$ in $X_0$, 
is continuous on $U$.    
\end{theorem}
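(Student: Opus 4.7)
The plan is to reduce the statement to the two ingredients that the paper has already prepared: the scaling invariant Harnack inequality supplied by Theorem~\ref{G-harnack} under (HJ), and the jump estimate (J$_0$) verified in Proposition~\ref{j0}. Together with the standing hypothesis $1\in\H(X)$, these are precisely the inputs needed to apply \cite[Corollary 3.2]{H-hoelder}, which yields the stated H\"older estimate on the level of bounded positive harmonic functions.

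First I would fix $\a\in(0,1)$ satisfying both \eqref{choice-a} and \eqref{a-for-KS}, so that Theorem~\ref{G-harnack} produces a constant $K\ge 1$ for which the scaling invariant Harnack inequality (HI) holds for every $U(x,R)\in\Uo$. Combined with $1\in\H(X)$ and property (J$_0$) from Proposition~\ref{j0}, \cite[Corollary 3.2]{H-hoelder} then delivers constants $\b\in(0,1)$ and $C'\ge 1$ such that the H\"older estimate holds for every $h\in\H_b^+(U(x,R))$. To pass from positive to arbitrary real $h\in\H_b(U(x,R))$, apply the estimate to $h+\|h\|_\infty\cdot 1\in\H_b^+(U(x,R))$, which is harmonic because $1\in\H(X)$; the additive constant cancels in $h(y)-h(x)$, and one absorbs a factor $2$ into the final constant $C$.

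For the second assertion, let $h$ be universally measurable and harmonic on an open set $U\subset X_0$, and fix $x_0\in U$. Choose $V=U(x_0,R)\in\Uo$ with $\ov V\subset U$. By the harmonicity of $h$ together with (M$_1$), we have $h(y)=\int h\,d\mu_y^V$ for every $y\in V$, and $h$ is $\mu_y^V$-integrable. Setting $f:=h\cdot 1_{V^c}$ and decomposing $f=f^+-f^-$, define
\begin{equation*}
 h_{\pm}(y):=\int f^{\pm}\,d\mu_y^V,\qquad y\in X.
\end{equation*}
A monotone convergence argument together with the iteration rule~\eqref{it-bal} (exactly as in the derivation of \eqref{ex-harmonic}, but applied to $f^{\pm}\wedge n\uparrow f^{\pm}$) shows that $h_+,h_-\in\H^+(V)$. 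Since the H\"older estimate above gives continuity for all bounded positive harmonic functions on subballs of $V$, Proposition~\ref{bounded-unbounded}(2) promotes $h_\pm$ to continuous functions on $V$. As $h=h_+-h_-$ on $V$, $h$ is continuous at $x_0$.

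The main obstacle I anticipate is a bookkeeping one, namely confirming that the hypotheses of \cite[Corollary 3.2]{H-hoelder} match exactly what has been prepared here; the paper has isolated them as (HI), $1\in\H(X)$, and (J$_0$), so no new estimates are needed. A secondary technical point is the extension of \eqref{ex-harmonic} to unbounded nonnegative integrable integrands in the construction of $h_\pm$, which is a routine monotone convergence argument combined with Fubini applied to \eqref{it-bal}.
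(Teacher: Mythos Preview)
Your approach is correct and essentially coincides with the paper's: verify (HI) via Theorem~\ref{G-harnack}, verify (J$_0$) via Proposition~\ref{j0}, and then invoke \cite[Corollary~3.2]{H-hoelder}. The paper simply writes ``\cite[Corollary~3.2]{H-hoelder} now yields the following result'' and leaves the passage from $\H_b^+$ to $\H_b$ and the ``in particular'' clause implicit; your additive-constant trick (using $1\in\H(X)$) and the decomposition $h=h_+-h_-$ with Proposition~\ref{bounded-unbounded}(2) are the natural ways to make those implicit steps explicit, and they are sound.
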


\section{A first application to L\'evy processes}

In this section, let us assume that $X=\reald$, $d\ge 1$,   $\rho(x)=|x-y|$,  and 
the measures~$\mu_x^U$ are given by a L\'evy process~$\mathfrak X$ on~$\reald$ such that, for some 
Borel measurable function $n\ge 0 $  on $\reald$  and  all $x\in \reald$, $r>0$   and  
  Borel sets~$A$ in~$\reald\setminus \ov {U(x,r)} $,
%  its L\'evy measure has a density 
% $n$ with respect  to Lebesgue measure. Then, for all $x\in X$, $r>0$   and  
%   Borel sets~$A$ in~$\reald\setminus \ov {U(x,r)} $,
\begin{equation}\label{levy-system}
%\mu_x^{U(x,r)}(A):=
\mathbbm P^x[X_{\tau_{U(x,r)}}\in A]=\mathbbm E^x \int_0^{\tau_{U(x,r)}} \int_An(z-X_u)\,dz\,du.
\end{equation}

\begin{lemma}\label{ls-suff}
Suppose that  $0<\a<1/2$ and $c_J\ge 1$ such that,
for     $   y,z\in \reald$,
\begin{equation}\label{KKz}
n(z)\le c_J n(z+  y )\qquad\mbox{ provided \ } |  y|<2\a|z|.
\end{equation} 
Then for all $x\in \reald$, $r>0$ and $y\in U(x,\a r)$,
\begin{equation*} 
                                 \mu_x^{U(x, \a r)} \le c_J
                                   \mu_y^{U(y,\a r)}   \on U(x,r)^c.
\end{equation*} 
In particular, {\rm (HJ)} holds.  
\end{lemma}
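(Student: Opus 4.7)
The plan is to first establish the displayed inequality $\mu_x^{U(x,\alpha r)}\le c_J \mu_y^{U(y,\alpha r)}$ on $U(x,r)^c$ as an inequality of measures, and then to extract (HJ) via the iteration identity (\ref{it-bal-1}).

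Fix a Borel set $A\subset U(x,r)^c$. Since $\alpha<1/2$ and $y\in U(x,\alpha r)$, every $z\in A$ satisfies $|z-x|\ge r$ and $|z-y|\ge |z-x|-|y-x|>(1-\alpha)r>\alpha r$, so $A$ is disjoint from both $\overline{U(x,\alpha r)}$ and $\overline{U(y,\alpha r)}$. The Lévy system formula (\ref{levy-system}) therefore applies to both $\mu_x^{U(x,\alpha r)}(A)$ and $\mu_y^{U(y,\alpha r)}(A)$. Using translation invariance of $\mathfrak X$ via the substitution $\tilde X_u:=X_u-x$ under $\mathbbm P^x$ and $\tilde X_u:=X_u-y$ under $\mathbbm P^y$, both become expectations under $\mathbbm P^0$ over the common stopping time $\tau_{U(0,\alpha r)}$:
\begin{equation*}
\mu_x^{U(x,\alpha r)}(A)=\mathbbm E^0\int_0^{\tau_{U(0,\alpha r)}}\!\!\int_A n(z-x-\tilde X_u)\,dz\,du,
\end{equation*}
and analogously with $y$ in place of $x$. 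It therefore suffices to prove the pointwise bound $n(z-x-v)\le c_J\,n(z-y-v)$ for all $z\in A$ and $v\in U(0,\alpha r)$. Setting $w:=z-y-v$ and $h:=y-x$, so that $z-x-v=w+h$, this reduces to $n(w+h)\le c_J\,n(w)$, which follows from (\ref{KKz}) (applied with ``$z$'' $\mapsto w+h$ and ``$y$'' $\mapsto -h$) provided $|h|<2\alpha|w+h|$. Since $|h|=|y-x|<\alpha r$ and $|w+h|=|z-x-v|\ge |z-x|-|v|>(1-\alpha)r$, and since $\alpha<1/2$ implies $\alpha<2\alpha(1-\alpha)$, we conclude $|h|<\alpha r<2\alpha(1-\alpha)r<2\alpha|w+h|$.

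For (HJ), fix $y\in U(x,\alpha^2 r)$. Since $\alpha<1/2$ gives $\alpha^2+\alpha<1$, we have $U(y,\alpha r)\subset U(x,r)$, and (\ref{it-bal-1}) applied to the process started at $y$ with $V:=U(y,\alpha r)\subset U:=U(x,r)$ yields $\mu_y^{U(x,r)}\ge \mu_y^{U(y,\alpha r)}$ on $U(x,r)^c$. Combined with the first inequality, this gives $\mu_x^{U(x,\alpha r)}\le c_J \mu_y^{U(y,\alpha r)}\le c_J \mu_y^{U(x,r)}$ on $U(x,r)^c$, which is precisely (HJ). The only genuine work is the translation-invariance reduction that brings both measures into expectations under the same $\mathbbm P^0$ with the same stopping time $\tau_{U(0,\alpha r)}$; once this is in place, the estimate collapses to a direct application of (\ref{KKz}) with the right choice of substitution.
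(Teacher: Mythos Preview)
Your proof is correct and follows essentially the same route as the paper's: translation invariance reduces both exit measures to $\mathbbm P^0$-expectations over the common stopping time $\tau_{U(0,\a r)}$, the Lévy-system formula (\ref{levy-system}) turns these into integrals against $n$, and the pointwise comparison is exactly the paper's application of (\ref{KKz}) with the same verification $|y-x|<\a r<2\a(1-\a)r<2\a|z-\tilde z|$; finally (\ref{it-bal-1}) yields (HJ) via $\mu_y^{U(y,\a r)}\le \mu_y^{U(x,r)}$ on $U(x,r)^c$.
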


\begin{proof} By translation invariance, it suffices to consider the case $x=0$. Let $r>0$
and $y\in U(0,\a r)$, $\tau:=\tau_{U(0,\a r)}$, and let $A$ be a Borel set in $U(0,r)^c$.   
By  (\ref{levy-system})  and translation invariance, 
\begin{eqnarray*} 
 &&\mu_y^{U(y, \a r )}(A)=\mu_0^{U(0,\a  r)}(A-y)\\
&=& \mathbbm E^0 \int_0^{\tau }\int_{A-y}  n(  z-X_u)\,dz\,du
= \mathbbm E^0 \int_0^{\tau  }\int_A
n(  z -X_u+y)\,dz\,du.
\end{eqnarray*} 
If $z\in A$ and  $\tilde z\in U(0,\a r)$, then
$ | y|<\a r< 2\a (1-\a)r<2\a |z-\tilde z|$, and hence $n(z-\tilde z)\le c_J n(z-\tilde z+y)$, by (\ref{KKz}). 
Considering also the case $y=0$, we conclude that 
    $ \mu_0^{ U(0,\a r) }(A)\le c_J \mu_y^{ U(y,\a r) } (A)$.
To complete the proof it suffices to recall that 
$\mu_y^{ U(y,\a r) } \le \mu_y^{ U(0,  r) }$ on $U(0,r)^c$, by (\ref{it-bal-1}).  
\end{proof} 

So, by Theorems \ref{G-harnack} and \ref{G-continuous}, we have the following.   

\begin{theorem}\label{appl-levy} Suppose that  $n$ satisfies {\rm (\ref{KKz})} and that there exists
a Borel measurable function $G\colon \reald\times \reald\to (0,\infty]$ satisfying
{\rm(G$_1$)} -- {\rm(G$_3$)} with $\rho(x,y)=|x-y|$. 
 
Then there exist $\a\in (0,1)$ and  $K\ge 1$ such that, for all $x\in \reald$ and $R>0$,
\begin{equation*} 
     \sup h(U(x,\a R))\le K\inf h(U(x,\a R)) \quad \mbox{ for all } h\in \H^+(U(x,R)).
\end{equation*} 
There are $\b\in (0,1)$ and $C\ge 1$
such that, for  $x\in \mathbbm R^d$ and $R>0$,
\begin{equation*}
          |h(y)-h(x)|\le C\|h\|_\infty \left(\frac{|x-y|} R\right)^\b \quad\mbox{ for all } y\in U(x,R), \,h\in\H_b(U(x,R)),
\end{equation*} 
and every universally measurable  function on $\reald$,  which is harmonic on an open set~$U$ in $X_0$,
is  continuous on $U$.    
\end{theorem}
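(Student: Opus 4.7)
My plan is to chain together three earlier results: Lemma~\ref{ls-suff} to produce (HJ), Theorem~\ref{G-harnack} to obtain the scale-invariant Harnack inequality, and Theorem~\ref{G-continuous} to obtain the H\"older bound and the continuity of harmonic functions. In this application the ambient setting is $X = X_0 = \reald$ with $\rho(x,y) = |x-y|$, so $R_0(x) = \infty$ for every $x \in \reald$ and every bounded open Euclidean ball belongs to $\Uo$.

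The first step is to observe that hypothesis (\ref{KKz}) on the L\'evy density $n$ is precisely the assumption of Lemma~\ref{ls-suff}, which therefore yields property (HJ) for the exit distributions $\mu_x^U$. The second step is to invoke Theorem~\ref{G-harnack}: the measures $\mu_x^U$ satisfy (M$_0$) and (M$_1$) by Example~\ref{hunt-bal},1 since a L\'evy process is a right continuous strong Markov process on the Radon space $\reald$; (HJ) is now in hand; and (G$_1$)--(G$_3$) are part of the hypotheses. Choosing $\a \in (0,1)$ small enough to satisfy both (\ref{choice-a}) and (\ref{a-for-KS}), Theorem~\ref{G-harnack} directly produces the Harnack inequality with some $K \ge 1$, valid on every ball $U(x,R)\in \Uo$, i.e., for all $x\in\reald$ and $R>0$.

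The third step is to apply Theorem~\ref{G-continuous} for the H\"older estimate and the continuity conclusion. Beyond (HJ), its remaining hypothesis is $1 \in \H(\reald)$. To verify it, take any $U \in \U(\reald)$; then $\ov U$ is compact, hence $U$ is bounded, and a standard fact about L\'evy processes---a L\'evy process cannot be trapped in a bounded open set---ensures $\tau_U < \infty$ \ $\mathbbm P^x$-almost surely for every $x\in\reald$. Consequently $\|\mu_x^U\| = \mathbbm P^x[X_{\tau_U}\in U^c]=1$, so $\int 1 \, d\mu_x^U = 1$, and the constant function $1$ is harmonic on $U$. Theorem~\ref{G-continuous} then supplies both the quantitative H\"older estimate on balls and the qualitative continuity of arbitrary universally measurable harmonic functions.

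I expect the verification of $1 \in \H(\reald)$ to be the only step that is not a pure citation; it is a one-line appeal to a well-known property of L\'evy processes. Everything else is formal assembly of the preceding machinery, so no essential obstacle arises once the correct chain of earlier results is identified.
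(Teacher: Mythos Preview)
Your proposal is correct and follows exactly the paper's own route: the paper's entire proof is the sentence ``So, by Theorems~\ref{G-harnack} and~\ref{G-continuous}, we have the following,'' having just established (HJ) via Lemma~\ref{ls-suff}. Your additional verification that $1\in\H(\reald)$ (needed for Theorem~\ref{G-continuous}) is a point the paper leaves implicit; your justification is adequate, though strictly speaking it relies on the process being non-trivial, which is guaranteed here by the existence of the Green function satisfying (G$_2$).
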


\begin{remark}{\rm
For a sufficient property which is weaker than (\ref{KKz}) see
(\ref{nyy}).   
}
\end{remark}

\section{Application based on an Ikeda-Watanabe estimate} 

To cover more general  processes let us return to the setting of Section~\ref{Green-Harnack},    
where we have the following: A separable metric space $(X,\rho)$ and       
harmonic measures~$\mu_x^U$ on~$X$, $x\in X$, $U$ open sets in~$X$, 
which satisfy (M$_0$) and  (M$_1$) (see Section \ref{setting}),  
and a~Borel measurable function 
$G\colon X\times X\to (0,\infty]$ such that  (G$_1$) -- (G$_3$) hold.
In particular, we have  an open set $X_0$ in $X$,   
balls  $U(x,r):=\{y\in X\colon \rho(x,y)<r\}$, and
$R_0(x):=\sup\{r>0\colon \ov{U(x,r)}\subset X_0\}$,  $x\in X_0$.

For every $V\in \U(X_0)$,   let $G_V$ be  the associated (Green) function on $V$, that is,  
\[
          G_V(x, y) :=G(x,y)-\int G(z,y) \,d\mu_x^V(z), \qquad x,y\in V,
\]
and $G_V:=0$ outside $V\times V$. 
We suppose that we have the following relation between the functions $G_{U(x,r)}(x,\cdot)$ and 
the harmonic measures  $\mu_x^{U(x,r)}$.
{\it
\begin{itemize}
\item[\rm (IW)]
There exist  $\lambda\in\M(X)$, a kernel $N$ on $X$, $M_{IW}\ge 1$ and $C_{IW}\ge 1$ 
such  that, for all $x\in X_0$, $0<r<R_0(x)$ and Borel sets $E$ in $X\setminus \ov{U(x, M_{IW} r)}$, 
\begin{equation}\label{ike-wat}
C_{IW}\inv   \mx^{U(x, r)}(E)\le \int G_{U(x,r)}(x,z)N(z,E)\,d\lambda(z)\le C_{IW} \mx^{U(x,r)}(E) .
\end{equation} 
\end{itemize}
}

\begin{remark}{\rm       
With $C_{IW} =1$ and $ M_{IW}=1$, (\ref{ike-wat})  is part of the Ikeda-Watanabe formula 
which holds for all (temporally homogeneous) L\'evy processes (see \cite[Example~1 and Theorem~1]{ikeda-watanabe}).

We are indebted to a referee of the manuscript \cite{HN-harnack} (which merged into the present paper) for the hint
that, in the Examples \ref{hunt-bal},2,  the Ikeda-Watanabe formula always holds under mild duality assumptions 
(where $\lambda$
is the Revuz measure of a positive continuous additive functional $H$ given by a L\'evy system $(N,H)$ 
for a  suitable Hunt process~$(X_t)$ and an excessive reference measure $m$ associated with~$(X,\W)$; 
 see \cite{benveniste-jacod, blumenthal-getoor, graversen-rao, revuz}):
\begin{equation*}
                     \lambda (A)= \lim\nolimits_{t\to 0} \frac 1t \, \mathbbm E^m\int_0^t 1_A\circ X_s\,dH_s, \qquad A\in \B(X).
\end{equation*}
}
\end{remark}

We shall get the following results, where it only remains to prove     
that property (HJ) is satisfied  (see (\ref{harnack-jump-1})  and Theorems \ref{G-harnack} and \ref{G-continuous}).

\begin{theorem}\label{IW-harnack}               
Suppose that  there exist   $C\ge 1$ and $\a\in (0,1)$ such that,  for all
$x\in X_0$,  $0<r<\a R_0(x)$,  $y,y'\in U(x,\a r)$,
\begin{equation}\label{Nxy}
                        N(y',\cdot)\le C N(y,\cdot) \on U(x,r)^c
\end{equation} 
 and 
\begin{equation}\label{int-xy}
  \int_{U(x,\a r)} g(\rho(x,z))\,d\lambda(z)\le C\int_{U(y,2\a r)}   g(\rho(y,z))\,d\lambda(z).
\end{equation} 
Then 
scaling invariant Harnack inequalities hold for  functions in $\H^+(U(x,R))$,  \hbox{$x\in X_0$} and $0<R<R_0(x)$.

Moreover, if $1\in \H(X)$, then  scaling invariant H\"older continuity holds for functions in $\H_b(U(x,R))$,  
$x\in X_0$ and $0<R<R_0(x)$, and every  universally measurable function on $X$,
 which is harmonic on
an open set $U$ in $X_0$,  is continuous on~$U$.   
\end{theorem}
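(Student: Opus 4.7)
By the remark preceding the theorem, both the Harnack and H\"older conclusions follow from Theorems \ref{G-harnack} and \ref{G-continuous} once I verify property (HJ), so my plan is to derive (HJ) from (IW), (\ref{Nxy}), and (\ref{int-xy}). Fix $x\in X_0$, $r<\alpha R_0(x)$, $y\in U(x,\alpha^2 r)$, and a Borel set $E\subset U(x,r)^c$; I aim at $\mu_x^{U(x,\alpha r)}(E)\le c_J\,\mu_y^{U(x,r)}(E)$ for $\alpha$ chosen small enough that all the estimates below apply. The strategy is to bound each side via (IW), use (\ref{Nxy}) on each to factor out a common $N(y,E)$, and then match the remaining $g\circ\rho$ integrals through (\ref{int-xy}).

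For the upper bound, (IW) applied to $U(x,\alpha r)$ (valid since $M_{IW}\alpha r\le r$), together with $G_{U(x,\alpha r)}(x,z)\le c\,g(\rho(x,z))$ from (G$_2$) and (\ref{Nxy}) giving $N(z,E)\le C N(y,E)$ for $z\in U(x,\alpha r)$ (note $y\in U(x,\alpha^2 r)\subset U(x,\alpha r)$), yields
$$\mu_x^{U(x,\alpha r)}(E)\le cCC_{IW}\,N(y,E)\int_{U(x,\alpha r)}g(\rho(x,z))\,d\lambda(z),$$
which (\ref{int-xy}) converts into a bound involving $\int_{U(y,2\alpha r)}g(\rho(y,z))\,d\lambda(z)$.

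For the lower bound I first use (\ref{it-bal-1}) to get $\mu_y^{U(x,r)}(E)\ge \mu_y^{U(y,2\alpha r)}(E)$ on $U(x,r)^c$ (since $U(y,2\alpha r)\subset U(x,r)$), then apply (IW) to $U(y,2\alpha r)$, valid because $\rho(y,E)\ge r(1-\alpha^2)\ge 2M_{IW}\alpha r$ for $\alpha$ small. To replace $G_{U(y,2\alpha r)}(y,z)$ by a multiple of $g(\rho(y,z))$ I restrict $z$ to the smaller ball $U(y,\alpha_M\alpha r)$, where (G$_2$) supplies $\alpha_M\in(0,1)$ with $g(\alpha_M s)\ge 2c^2 g(s)$ for all $s>0$. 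On that ball the harmonic correction $\int G(w,z)\,d\mu_y^{U(y,2\alpha r)}(w)\le c\,g(\alpha r)$ is dominated by $c\inv g(\rho(y,z))$, yielding $G_{U(y,2\alpha r)}(y,z)\ge(2c)\inv g(\rho(y,z))$. Since also $U(y,\alpha_M\alpha r)\subset U(x,\alpha r)$ when $\alpha+\alpha_M\le 1$, (\ref{Nxy}) again gives $N(z,E)\ge C\inv N(y,E)$, so
$$\mu_y^{U(y,2\alpha r)}(E)\ge(2cCC_{IW})\inv N(y,E)\int_{U(y,\alpha_M\alpha r)}g(\rho(y,z))\,d\lambda(z).$$

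The main remaining obstacle is to bound the ratio $\int_{U(y,2\alpha r)}g(\rho(y,z))\,d\lambda(z)\,/\,\int_{U(y,\alpha_M\alpha r)}g(\rho(y,z))\,d\lambda(z)$ by a constant independent of $x,y,r$. This is a reverse-doubling question for the measure $g\circ\rho(y,\cdot)\,d\lambda$, and I expect to resolve it by iterating (\ref{int-xy}) a bounded number of times (exchanging the roles of $x$ and $y$ at progressively refined scales) together with the doubling and scaling of $g$ from (G$_2$); morally both integrals behave like a power of the radius. Once that uniform bound is in hand, (HJ) holds with an explicit $c_J$, and Theorems \ref{G-harnack} and \ref{G-continuous} deliver the Harnack inequality, the H\"older estimate, and the continuity of positive harmonic functions asserted in the theorem.
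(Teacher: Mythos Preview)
Your overall strategy is right, and your upper and lower estimates are each correct; the problem is the ``remaining obstacle'' at the end, which is a genuine gap. You need
\[
\int_{U(y,2\a r)} g(\rho(y,z))\,d\lambda(z)\ \le\ C'\int_{U(y,\a_M\a r)} g(\rho(y,z))\,d\lambda(z),
\]
but this is a \emph{reverse}-doubling inequality for the measure $g(\rho(y,\cdot))\,d\lambda$, and nothing in the hypotheses provides it. Condition (\ref{int-xy}) only bounds a small-ball integral by a larger-ball integral (possibly with shifted centre); iterating it can never reverse the direction. The scaling and doubling in (G$_2$) concern $g$ alone, not $\lambda$, so they do not help either. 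Without any doubling hypothesis on~$\lambda$, the ratio you need to control can be arbitrarily large.

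The paper sidesteps this mismatch by a simple change of scale that you should adopt. Do the upper estimate at radius $\a^2 r$ rather than $\a r$: bounding $\mu_x^{U(x,\a^2 r)}(E)$ via (IW) and then applying (\ref{int-xy}) at scale $\a r$ produces the $y$-integral over $U(y,2\a^2 r)$. For the lower estimate, apply (IW) to $U(y,\a r)$; by Lemma~\ref{GGB} (with $\a<\a_M$) one has $G_{U(y,\a r)}(y,\cdot)\ge\tfrac12 G(y,\cdot)$ precisely on $U(y,2\a^2 r)$, so the lower bound lives on the \emph{same} ball $U(y,2\a^2 r)$. The integrals now match exactly, no ratio needs to be controlled, and (HJ) follows (with $\a^2$ in place of $\a$, which is harmless).
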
 

Let us note that (\ref{int-xy}) trivially holds, if $X=\reald$, $d\ge 1$, $\rho(x,y)=|x-y|$   
and~$\lambda$ is Lebesgue measure.

\begin{theorem}\label{iso}
Suppose that $X=\reald$,  $\rho(x,y)=|x-y|$, 
the measure~$\lambda$ in {\rm (IW)} is Lebesgue measure and
there exists   $c_2\ge 1$
such that the normalized Lebesgue measure~$\lambda_{U(x,r)}$ on~$U(x,r)$ satisfies\footnote{The inequality
(\ref{lambda-g}) trivially implies that (G$_3$) holds (see also \cite[(1.14)]{H-liouville-wiener}).}  
\begin{equation}\label{lambda-g}
      G\lambda_{U(x,r)}\le c_2 g(r), \qquad \mbox{$ x\in X_0$, $0<r<R_0(x)$}.
\end{equation} 
 Moreover, assume
 that there are a measure $\tilde \lambda$ on $\reald$, a Borel measurable function    $n\colon \reald\times \reald\to
[0,\infty)$ and $C\ge 1$, $\a\in (0,1)$ 
   such that $N(y,\cdot)=n(y,\cdot)\tilde\lambda$, $y\in X_0$, and,
for all $y,y'\in X_0$ and $\tilde z\in X$,
\begin{equation}\label{nyy}
n(y,\tilde z)\le C n(y',\tilde z), \qquad\mbox{ if }  |y-\tilde z|\ge
|y'-\tilde z| \mbox{ \ and \ } |y-y'|<\a |y'-\tilde z| .
\end{equation} 
Then the conclusions of Theorem \ref{IW-harnack}  prevail.
\end{theorem}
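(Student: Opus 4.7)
The strategy is to deduce Theorem \ref{iso} from Theorem \ref{IW-harnack} by verifying the two hypotheses (\ref{int-xy}) and (\ref{Nxy}) of that theorem under the present concrete assumptions; the hypothesis (\ref{lambda-g}) itself requires no separate work, since by the accompanying footnote it merely secures (G$_3$), which is already in force throughout the section.

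First I would verify (\ref{int-xy}). Since $\lambda$ is Lebesgue measure on $\reald$, translation invariance writes both sides of (\ref{int-xy}) as integrals of $g(|w|)$ over the centered balls $U(0,\alpha r)$ and $U(0,2\alpha r)$ respectively. Because $g>0$ and $U(0,\alpha r)\subset U(0,2\alpha r)$, the bound follows with constant $C=1$, independently of $x, y, r$. Next I would verify (\ref{Nxy}). Using the factorization $N(y,\cdot)=n(y,\cdot)\tilde\lambda$, this reduces to a pointwise estimate $n(y',\tilde z)\le C\, n(y,\tilde z)$ for $y,y'\in U(x,\alpha' r)$ and $\tilde z\in U(x,r)^c$, with $\alpha'$ to be chosen small. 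The relevant geometric facts are $|y-\tilde z|,\,|y'-\tilde z|\ge (1-\alpha')r$ and $|y-y'|<2\alpha' r$, so that $2\alpha'/(1-\alpha')$ can be made smaller than the constant $\alpha_0$ appearing in (\ref{nyy}). Choosing $\alpha'$ this way, the side condition of (\ref{nyy}) is automatic, and (\ref{nyy}) applied with the more distant of $y, y'$ in the role of the ``far'' point yields the desired comparability.

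The main technical point lies in this second step: (\ref{nyy}) is one-sided, bounding $n$ at the far point in terms of $n$ at the near point, whereas (\ref{Nxy}) is formally symmetric in $y$ and $y'$ and thus implicitly asks for comparability in both directions on $U(x,r)^c$. Since in the present geometry $|y-\tilde z|$ and $|y'-\tilde z|$ are of comparable size, one can apply (\ref{nyy}) separately in each ordering of $y, y'$ relative to $\tilde z$, and care must be taken that the resulting estimates combine to yield the symmetric bound needed. Once (\ref{int-xy}) and (\ref{Nxy}) are in place, Theorem \ref{IW-harnack} immediately delivers the scaling invariant Harnack inequality for positive harmonic functions on balls $U(x,R)\in\Uo$, and, under the additional hypothesis $1\in\H(X)$ inherited in the present setting, the H\"older continuity of functions in $\H_b(U(x,R))$ as well as the continuity of universally measurable harmonic functions on open subsets of $X_0$ follow at once.
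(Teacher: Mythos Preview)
Your reduction to Theorem \ref{IW-harnack} does not go through, because the one-sided hypothesis (\ref{nyy}) does not imply the two-sided comparability (\ref{Nxy}). Condition (\ref{nyy}) says only that $n$ at the \emph{farther} point is controlled by $n$ at the \emph{nearer} point; it says nothing in the opposite direction. For $y,y'\in U(x,\a'r)$ and $\tilde z\in U(x,r)^c$, your case analysis on which of $y,y'$ is farther from $\tilde z$ yields $n(\text{far},\tilde z)\le C\,n(\text{near},\tilde z)$, but (\ref{Nxy}) also requires $n(\text{near},\tilde z)\le C\,n(\text{far},\tilde z)$, and (\ref{nyy}) simply does not give that. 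A decreasing radial kernel $n_0(|y-\tilde z|)$ that drops off very rapidly (for instance like $e^{-t^2}$) satisfies (\ref{nyy}) trivially with $C=1$, yet the ratio $n_0(|y'-\tilde z|)/n_0(|y-\tilde z|)$ is unbounded as $|\tilde z|\to\infty$ even when $|y-y'|$ is small, so (\ref{Nxy}) fails. You correctly flagged this asymmetry as ``the main technical point,'' but the resolution you sketch does not work.

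The paper therefore does \emph{not} pass through Theorem \ref{IW-harnack}. It establishes (HJ) directly from (IW) and Lemma \ref{GGB}, reducing matters to the weighted estimate
\[
\int_{U(x,\tilde r)} g(|x-z|)\,n(z,\tilde z)\,d\lambda(z)\;\le\; C'\,g(\tilde r)\int_{U(x,\tilde r)} n(z,\tilde z)\,d\lambda(z).
\]
On the annulus $U(x,\tilde r)\setminus U(x,\tilde r/2)$ one has $g(|x-z|)\le c_D g(\tilde r)$ and no kernel comparison is needed. On the inner ball $B=U(x,\tilde r/2)$, where $g$ blows up, one introduces a small ball $B'\subset U(x,\tilde r)\setminus B$ shifted toward $\tilde z$; for $z\in B$ and $z'\in B'$ one has $|z-\tilde z|\ge |z'-\tilde z|$, so (\ref{nyy}) is applied in its \emph{correct} direction to bound $n(z,\tilde z)$ by an average of $n(z',\tilde z)$ over $B'$. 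The remaining factor $\lambda(B)^{-1}\int_B g(|x-z|)\,d\lambda(z)$ is then controlled by (\ref{lambda-g}). Thus (\ref{lambda-g}) is used essentially in the argument, not merely to secure (G$_3$) as you suggest.
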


\begin{remark}{ 
Suppose that there exists a
function $n_0\colon [0,\infty)\to [0,\infty)$, $C_0\ge 1$ and $\a\in (0,1)$ such that
 $C_0\inv n_0(|x-y|)\le n(x,y)\le C_0 n_0(|x-y|)$ and
\begin{equation}\label{weak-decreasing}
                         n_0(t) \le C_0  n_0(s), \qquad\mbox{ whenever
                         } 0<s<t<(1+\a) s. 
\end{equation} 
Then  {\rm (\ref{nyy})} is satisfied.
}
\end{remark}

Thus  rather general L\'evy processes abundantly provide  examples for our
approach (see 
\cite{grzywny, kim-mimica, mimica-harnack,  mimica-harmonic, 
rao-song-vondracek, sikic-song-vondracek}).

For the proofs of Theorem \ref{IW-harnack} and \ref{iso} we need  the following simple statement, 
where $M:=2 c_Dc^2$ and $0<\a_M<1/4$ such that $M g(r)\le g(\a_M r)$, $r>0$ 
(see (\ref{alpha-eta})).

\begin{lemma}\label{GGB} Let $y\in X$, $r>0$ and $0<\a<\a_M$. Then
\begin{equation}\label{GGM}
                G_{U(y,r)}(\cdot,y)\ge \frac 12 \, G(\cdot,y) \on U(y,2\a r).
\end{equation} 
\end{lemma}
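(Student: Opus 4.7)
The plan is to work directly from the definition
\[
G_{U(y,r)}(x,y)=G(x,y)-\int G(z,y)\,d\mu_x^{U(y,r)}(z),
\]
and to control the integral by $\tfrac12 G(x,y)$ for $x\in U(y,2\alpha r)$. Since $\mu_x^{U(y,r)}$ is supported in $U(y,r)^c$ (by (M$_0$)) and has total mass at most $1$, the upper bound $G\le c\,g\circ\rho$ from (G$_2$) together with the monotonicity of $g$ immediately gives
\[
\int G(z,y)\,d\mu_x^{U(y,r)}(z)\le c\,g(r).
\]
So everything comes down to showing $G(x,y)\ge 2c\,g(r)$ whenever $\rho(x,y)<2\alpha r$.

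For that bound I would combine the lower estimate $G\ge c^{-1}g\circ\rho$ from (G$_2$) with the scaling and doubling properties of $g$. Since $\alpha<\alpha_M$ and $g$ is decreasing,
\[
g(2\alpha r)\ge g(2\alpha_M r)\ge c_D^{-1}g(\alpha_M r)\ge c_D^{-1}M\,g(r)=2c^2 g(r),
\]
where the middle step uses $g(r/2)\le c_D g(r)$ applied to $r'=2\alpha_M r$, and the last step uses the definition $M=2c_Dc^2$ together with the choice of $\alpha_M$ guaranteed by (\ref{alpha-eta}). Hence for $x\in U(y,2\alpha r)$,
\[
G(x,y)\ge c^{-1}g(\rho(x,y))\ge c^{-1}g(2\alpha r)\ge 2c\,g(r),
\]
so $c\,g(r)\le\tfrac12 G(x,y)$ and subtracting from $G(x,y)$ yields (\ref{GGM}).

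There is no real obstacle here; the only thing to be careful about is chaining the scaling inequality (which controls $g(\alpha_M r)$ in terms of $g(r)$) with the doubling inequality to transfer the estimate from $g(\alpha_M r)$ to $g(2\alpha_M r)$, and ensuring the constants work out to exactly $2c\,g(r)$ so that the factor $\tfrac12$ in the statement of the lemma is reached.
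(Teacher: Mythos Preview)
Your proof is correct and follows essentially the same route as the paper: bound the integral $\int G(z,y)\,d\mu_x^{U(y,r)}$ by $c\,g(r)$ using (M$_0$) and the upper estimate in (G$_2$), then show this is at most $\tfrac12 G(x,y)$ via the lower estimate $G(x,y)\ge c^{-1}g(2\alpha r)$ combined with the doubling and scaling properties of $g$. The paper compares both quantities to $g(\alpha r)$ while you compare them to $g(r)$, but the computation is the same (and your detour through $\alpha_M$ in the chain $g(2\alpha r)\ge g(2\alpha_M r)$ is harmless but unnecessary, since $\alpha<\alpha_M$ already gives $g(\alpha r)\ge g(\alpha_M r)\ge Mg(r)$ directly).
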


\begin{proof} 
Let  $x\in U(y,2\a r)$.  Since $G(\cdot,y)\le c g(r) $ on~${U(y,r)^c}$, we obtain that
\begin{equation*} 
\int G(z,y)\,d\mu_x^{U(y,r)}\le  c g(r)       \|\mu_x^{U(y,r)}\| \le  (2cc_D)\inv g(\a r),
\end{equation*} 
whereas $G(x,y)\ge c\inv g(2\a r)\ge (c c_D)\inv g(\a r)$. So (\ref{GGM}) holds.
\end{proof}

\begin{proof}[Proof of Theorem \ref{IW-harnack}] To prove (HJ) we fix $0<\a<(\a_M\wedge M_{IW}\inv)/2$.   
Now let  $x\in X_0$, $0<r<\a R_0(x)$, $y\in U(x, \a^2 r)$, and let  $E$ be a~Borel  set in~$U(x,r )^c$. 
Then $E$ is contained in both $U(x,M_{IW}\a r)^c$ and $U(y,M_{IW}\a r)^c$.  
Hence, by~(\ref{ike-wat}) -- (\ref{int-xy}), Lemma \ref{GGB}  and (\ref{it-bal-1}),  
\begin{eqnarray*} 
\mx^{U(x,  \a^2 r)} (E)&\le& C_{IW} \int G_{U(x,  \a^2 r)}(x,z) N(z,E)\,d\lambda(z)\\
&\le& cCC_{IW} N(y,E)\int_{U(x, \a^2 r)} g(\rho(x,z))  \,d\lambda(z)\\
&\le &c C^2C_{IW}  N(y,E) \int_{U(y, 2 \a^2 r)} g(\rho(y,z))  \,d\lambda(z)\\
&\le &2c^2 C^3C_{IW} \int  G_{U(y,  \a r)}(y,z) N(z,E) \,d\lambda(z) \\
&\le&  2c^2 C^3 C_{IW}^2 \, \my^{U(y,\a r)}(E) \le  2c^2 C^3 C_{IW}^2 \, \my^{U(x, r)}(E). 
\end{eqnarray*} 
Thus (HJ) holds (with $\a^2$ in place of $\a$).
\end{proof} 

\begin{proof}[Proof of Theorem \ref{iso}  {\rm (cf.\ the proof of {\cite[Proposition 6]{grzywny}})}]   
First, we choose $0<\a<(\a_M\wedge M_{IW}\inv)/2$ such that (\ref{nyy}) holds.  
Next, we fix $x\in X_0$, $0<r< R_0(x)$ and a~Borel set    $E$ in $ U(x,r )^c$. 
Then $E\subset X\setminus \ov{U(y,M_{IW}\a r)}$ for every  $y\in U(x, \a^2 r)$. 
 By~(\ref{ike-wat}), (\ref{it-bal-1}) and Lemma \ref{GGB}, 
\begin{eqnarray*} 
\mu_x^{U(x,\a^2 r)}(E)&\le& cC_{IW} \int_{U(x,\a^2 r)}\int_E
g(|x-z|)n(z,\tilde z)\,d\tilde \lambda(\tilde z)\,d\lambda(z),\\
\mu_y^{U(x, r)}(E)  &\ge&\mu_y^{U(y, \a r)}(E)\ge  C_{IW}\inv \int G_{U(y, \a r)}(y,z)N(z,E)\,d\lambda(z)\\
     &\ge&  (2C_{IW}) \inv  \int_{U(y,2\a^2 r)} G(y,z) N(z,E)\,d\lambda(z)\\
&\ge& (2cc_DC_{IW})\inv  g(\a^2 r)\int_{U(x,\a^2 r)}\int_E n(z,\tilde z)\,d\tilde
     \lambda(\tilde z) \, d\lambda(z).
\end{eqnarray*} 
Defining  $\tilde r:=\a^2 r$, we hence  have 
to show that, with some constant $C'>0$,
\begin{equation}\label{suff-tilde}
\int_{U(x,\tilde r)} g(|x-z|)n(z,\tilde z)\,d\lambda(z) \le C'
g(\tilde r) \int_{U(x,\tilde  r)} n(z,\tilde z) \, d\lambda(z)
\end{equation} 
 for every $\tilde z\in U(x,r)^c$.  To that end let us fix $\tilde z\in U(x,r)^c$.

Let $B:=U(x,\tilde r/2)$. Since $g(|x-y|)\le g(\tilde r/2)\le c_D g(\tilde r)$ for every  $y\in B^c$,  
\[ 
\int_{U(x,\tilde r)\setminus B} g(|x-y|)n(y,\tilde z)\,d\lambda(y) \le  c_D g(\tilde r)\int_{U(x,\tilde r)}
n(y,\tilde z) \, d\lambda(y).
\]
Moreover, let
\[
          x':=x+\frac 3 4 \, \frac {\tilde z-x}{|\tilde z-x|}\, \tilde r \und  B':=U(x',\tilde r/4),
\] 
so that $B'\subset U(x,\tilde r)\setminus B$. 
If $y\in B$ and $y'\in B'$, then $|y-\tilde z|\ge |y'-\tilde z|$ and
$|y-y'|<3\tilde r/2< \a( r- \tilde r)<\a |y'-\tilde z|$, 
and therefore, by (\ref{nyy}),
\[
n(y,\tilde z)\le \frac C{ \lambda(B')} \int_{B'} n(y',\tilde z)\, d  \lambda(y')
=\frac {2^d C}{ \lambda(B)} \int_{B'} n(y',\tilde z)\, d  \lambda(y').
\]
Hence
\begin{multline*} 
 \int_B g(|x-y|)n(y,\tilde z)\,d\lambda(y)\\
 \le  2^d C\left(\int_{B'} n(y',\tilde z)\, d  \lambda(y')\right)\cdot
\left(\frac 1{\lambda(B)}\int_B g(|x-y|)\,d\lambda(y)\right),
\end{multline*} 
where, by (\ref{lambda-g}), 
\[\frac1{\lambda(B )} 
\int_Bg(|x-y|)\,d\lambda(y)\le cG\lambda_B(x)\le  cc_2 g(\tilde r/2) \le cc_Dc_2 g(\tilde r). 
\]
Thus (\ref{suff-tilde}) holds with $C':=c_D(1+2^dcc_2C)$. 
\end{proof}

\section{Intrinsic scaling invariant Harnack inequalities}\label{section-intrinsic}

In this section we shall weaken the assumptions 
and prove intrinsic scaling invariant Harnack inequalities, where the metric is derived
from  the Green function.
We start with the same setting as in Section \ref{Green-Harnack} 
(assuming that $X_0$ is a proper subset of $X$) and suppose 
that we have a~Borel measurable function $G\colon X\times X\to (0,\infty]$ 
which satisfies (G$_1$).  Let us define 
\begin{eqnarray*} 
           V(x,s)\!\!\!&:=&\!\!\!\{y\in X\colon G(y,x)\inv<s\}, \qquad x\in X,\, s>0,\\
           S_0(x)\!\!\! &:=&\!\!\! \sup\{s>0\colon \ov{V(x,s)}\subset X_0\}, \qquad x\in X_0.
\end{eqnarray*}  

Instead of (G$_2$) and (G$_3$) we assume the following properties (where also the case
$w=1$ is of interest). 
{\it
\begin{itemize} 
\item [\rm (G$_2'$)]  
For every $x\in X$, 
$G(x,x)=\lim_{y\to x} G(y,x)=\infty$, and there exists 
a~Borel measurable function $w$ on $X$, $0<w\le 1$, such that 
\begin{equation}\label{w-superharmonic}
\int w\,d\mu_x^U\le w(x)\qquad\mbox{ for all open sets $U$ in $X$ and $x\in X$}, 
\end{equation} 
  and  $G$ has the \emph{$(w,w)$-triangle property},
that is, for some    $\tilde c >1$, the function  
\[
\tilde G\colon (x,y)\mapsto \frac{G(x,y)}{w(x)w(y)}
\] 
 satisfies
\begin{equation}\label{ww-triangle}
   \tilde G(x,z)\wedge \tilde G(y,z)\le \tilde c \tilde G(x,y), \qquad x,y,z\in X.
\end{equation} 
Moreover,  $\lambda:=\inf w(X_0)>0$ and, for every $x\in X$ and  neighborhood 
 $V$ of~$x$, the function $G(\cdot,x)/w$ is bounded on $V^c$.
\item[\rm ($\ov G{}_3'$)]
 There exists $c_3\ge 1$ such that, for all  $x\in X_0$ and $0<s<S_0(x)$,
\begin{equation}\label{G3prime}
         \|\vy^{\ov{V(x,s)}}\|\ge c_3\inv s G(y,x), \qquad y\in V(x,s)^c.
\end{equation} 
\end{itemize} 
}
 
Moreover, we introduce the following property. 
{\it
\begin{itemize}
\item[\rm (HJ$'$)]
 There exist $\a\in (0,1)$ and  $c_J\ge 1$ such that, for  all $x\in X_0$ and $0<s<S_0(x)$, 
\begin{equation}\label{ovHJ}
   \mu_x^{\overset\circ V(x,\a s)}\le c_J   \mu_y^{\overset\circ  V(x,s)}\on \overset \circ V(x,s)^c, \qquad y\in \overset\circ V(x,\a^2 s).
\end{equation} 
\end{itemize}
}

\begin{remarks}{\rm
1. Clearly, (G$_2$)  implies (G$_2'$) with $w=1$,   by Proposition  \ref{rho-triangle}. 

2. In interesting cases, the function $G$ does not have the $(1,1)$-triangle property, 
 and hence (G$_2$) cannot hold. However, in these cases the $(w,w)$-triangle property
frequently holds with  $w=  G(\cdot,y_0)\wedge 1$,  where $y_0$ is some fixed point in $X_0$.
This is already the case in classical potential theory and the theory of Riesz potentials, if~$X$ is an 
open ball in $\reald$.
}
\end{remarks}

\begin{proposition}\label{tilde-rho}
There exist a metric $\tilde \rho$ for the topology of $X$, $\g\ge 1$ and $C\ge 1$ 
such that, for all $(x,y)\in X\times X$,
\begin{equation}\label{cGc}
                                 C\inv \tilde \rho(x,y)^{-\g}\le \tilde G(x,y)\le C   \tilde \rho(x,y)^{-\g}.
\end{equation} 
In particular, {\rm (G$_2$)} holds for $\tilde G$ with $\tilde g(r):=r^{-\g}$. 
Moreover, let  
\begin{eqnarray*}
\tilde U(x,r)\!\!\!&:=&\!\!\!\{y\in X\colon \tilde \rho(x,y)<r\}, \qquad x\in X, \,r>0,\\
\tilde R_0(x)\!\!\!&:=&\!\!\!\sup\{r>0\colon \ov{\tilde U(x,r)}\subset X_0\},
\end{eqnarray*} 
and $\b:=(\lambda/C)^{2/\g}$. Then, for all $x\in X_0$ and $r>0$,    
\begin{equation}\label{UVU}  
\tilde U(x,\b r)\subset     V(x,C\inv r^\g)\subset \tilde U(x,r).
\end{equation} 
\end{proposition}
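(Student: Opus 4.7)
My plan is to produce $\tilde\rho$ by the standard snowflake-plus-chain-infimum construction applied to the symmetric kernel $1/\tilde G$. I first record that (G$_2'$) already forces $\tilde G$ to be essentially symmetric: applying (\ref{ww-triangle}) with $z=x$ and using $\tilde G(x,x)=\infty$ gives $\tilde G(y,x)\le\tilde c\,\tilde G(x,y)$, and by the symmetric roles of $x,y$ also the reverse estimate. I then set $d(x,y):=1/\tilde G(x,y)$ for $x\neq y$ and $d(x,x):=0$. Rewriting (\ref{ww-triangle}) in terms of $d$ yields $d(x,y)\le\tilde c\,\max(d(x,z),d(y,z))$, so the symmetrisation $\hat d(x,y):=\max(d(x,y),d(y,x))$ is comparable to $d$ (up to the factor $\tilde c$) and satisfies the quasi-ultrametric inequality $\hat d(x,y)\le\tilde c\,\max(\hat d(x,z),\hat d(z,y))$.

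Next I choose $\g\ge\log_2\tilde c$, so that $\tilde c^{1/\g}\le 2$, and set $\sigma:=\hat d^{\,1/\g}$. Then $\sigma(x,y)\le 2\,\max(\sigma(x,z),\sigma(z,y))\le 2(\sigma(x,z)+\sigma(z,y))$, i.e.\ $\sigma$ is a symmetric quasi-metric of constant $2$. Frink's metrisation lemma (applied via the chain-infimum $\tilde\rho(x,y):=\inf\sum_{i=1}^n\sigma(x_{i-1},x_i)$ over finite chains $x=x_0,\ldots,x_n=y$) then delivers an honest metric $\tilde\rho$ with $\tilde\rho\le\sigma\le 4\tilde\rho$. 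Chaining the proportionalities $\tilde\rho^{\,\g}\asymp\sigma^\g=\hat d\asymp d=1/\tilde G$ yields (\ref{cGc}). Property (G$_2$) for $\tilde g(r):=r^{-\g}$ is then automatic: the doubling constant is $2^\g$ and for any $M_0\ge 1$ one may take $\a_0:=M_0^{-1/\g}$.

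Topology compatibility is handled by the two remaining clauses of (G$_2'$). Since $w\le 1$ one has $\tilde G\ge G$ pointwise, so $G(y,x)\to\infty$ as $y\to x$ in $\rho$ already forces $\tilde G(y,x)\to\infty$, i.e.\ $\tilde\rho(y,x)\to 0$. Conversely, the hypothesis that $G(\cdot,x)/w$ is bounded on the complement of any $\rho$-neighbourhood $V$ of $x$ gives the boundedness of $\tilde G(\cdot,x)$ on $V^c$, so $\tilde\rho(x_n,x)\to 0$ cannot hold while $x_n$ stays outside some such neighbourhood; the two topologies agree.

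Finally, for (\ref{UVU}) I use the lower bound $w\ge\lambda$ on $X_0$ once: for $y\in\tilde U(x,\b r)\cap X_0$, the symmetry of $\tilde\rho$ lets me apply (\ref{cGc}) to $(y,x)$, and the choice $\b^{-\g}=(C/\lambda)^2$ yields
\[
G(y,x)=w(y)w(x)\tilde G(y,x)\ge\lambda^2 C^{-1}(\b r)^{-\g}=C\,r^{-\g},
\]
so $y\in V(x,C^{-1}r^\g)$; the reverse inclusion $V(x,C^{-1}r^\g)\subset\tilde U(x,r)$ follows at once from $\tilde G\ge G$ together with (\ref{cGc}). The main obstacle in the plan is the passage from the quasi-ultrametric $\hat d$ to an actual metric: the snowflake exponent must be taken large enough that Frink's lemma can triangulate $\sigma$, and this is precisely what dictates the value of $\g$ in (\ref{cGc}); everything else is bookkeeping around the $(w,w)$-twist in the triangle property.
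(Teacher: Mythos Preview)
Your argument is correct and follows essentially the same route as the paper. The paper observes that $\tilde G^{-1}$ (symmetrised) is a quasi-metric and then cites \cite[Proposition~14.5]{heinonen} as a black box to obtain $\tilde\rho$, $\gamma$, and $C$; you unpack that black box by choosing the snowflake exponent $\gamma\ge\log_2\tilde c$ explicitly and invoking the chain-infimum (Frink) construction, which is exactly how Heinonen's proposition is proved. The verification of the inclusions~(\ref{UVU}) and of the topology compatibility is the same in both arguments; your restriction to $y\in X_0$ in the first inclusion is in fact slightly more careful than the paper's write-up, since the lower bound $w(y)\ge\lambda$ is only guaranteed there (and this is all that is needed in the subsequent applications, where one works inside $X_0$). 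One cosmetic point: the proposition asserts $\gamma\ge 1$, so you should take $\gamma:=\max(1,\log_2\tilde c)$ rather than merely $\gamma\ge\log_2\tilde c$.
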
 

\begin{proof} 
Since $\tilde G=\infty$ on the diagonal, (\ref{ww-triangle}) implies that $\tilde G(y,x)\le \tilde c \tilde G(x,y)$
and 
\begin{equation*} 
(x,y) \mapsto  \tilde G(x,y)\inv + \tilde G(y,x)\inv
\end{equation*} 
defines a quasi-metric on $X$ which is equivalent to $\tilde G\inv$. So, by  \cite[Proposition 14.5]{heinonen}
(see also \cite[Proposition~6.1]{H-liouville-wiener}), there exist a metric~$\tilde \rho$ on~$X$, $\g\ge 1$ and 
$C\ge 1$ such that  (\ref{cGc}) holds. 

Now let $x\in X$, $r>0$ and $s:=C\inv r^\g$. If $y\in \tilde U(x,\b r) $, then 
\begin{equation*}
G(y,x)\ge \lambda^2 \tilde G(y,x)\ge \lambda^2C\inv \tilde \rho(x,y)^{-\g}>\lambda^2 C\inv (\b r)^{-\g}=s\inv.
\end{equation*} 
Therefore $\tilde U(x,\b r)\subset V(x,s)$.
If $y\in V(x,s)$, then 
\begin{equation*} 
C \tilde \rho(x,y)^{-\g}\ge \tilde G(y,x) \ge G(y,x)> C r^{-\g},
\end{equation*} 
and hence $\tilde \rho(x,y)< r$. So  $V(x, s)\subset \tilde U(x,r)$,
where $V(x,s)$ is a~neighborhood of $x$, since $\lim_{z\to x} G(z,x)=\infty$.

Finally,  if $V$ is a  neighborhood of~$x$, there exists  $M>0$ with 
$G(\cdot,x)/w\le M$ on~$V^c$. Now let $r:=(CM/w(x))^{-1/\g}$.
Then, for every $y\in \tilde U(x,r)$,
\begin{equation*} 
G(y,x) /w(y)=w(x)\tilde G(y,x) \ge  C\inv  w(x) \tilde \rho(x,y)^{-\g}>M.
\end{equation*} 
Hence $\tilde U(x,r)\subset V$.  
Thus $\tilde \rho$ is a metric for the topology of $X$.
\end{proof}

 We intend to prove the following theorem, where (HJ$'$) trivially holds,
if the measures $\mu_x^U$ are supported by the boundary of $U$.

\begin{theorem}\label{harnack-general}
Suppose that  {\rm (HJ$'$)} holds. Then
  there exist $ \a\in (0,1)$ and $K\ge 1$ such that, for all $x\in X_0$, $0<R<\tilde R_0(x)$
and $h\in \H^+(\tilde U(x,R))$,
\begin{equation}\label{HI-general}
  \sup h(\tilde U(x,\a R))\le K   \inf  h(\tilde U(x,\a R)). 
\end{equation} 
 \end{theorem}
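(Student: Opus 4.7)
The plan is to apply Theorem \ref{m-harnack} in the intrinsic metric $\tilde\rho$ supplied by Proposition \ref{tilde-rho}, using as quasi-capacity the original Green capacity $\kap$ (with respect to $G$) and the scale function $m_0(r):=r^\g$. By that proposition, $\tilde\rho$ metrizes the topology of $X$, $\tilde G$ satisfies (G$_2$) with $\tilde g(r)=r^{-\g}$, and the sandwich $\tilde U(x,\b r)\subset V(x,C^{-1}r^\g)\subset \tilde U(x,r)$ in (\ref{UVU}) relates $\tilde\rho$-balls to Green-level sets; from $G(y,z)=w(y)w(z)\tilde G(y,z)$ with $w\le 1$ on $X$ and $w\ge\lam$ on $X_0$ one obtains the two-sided bound $\lam^2C^{-1}\tilde\rho(y,z)^{-\g}\le G(y,z)\le C\tilde\rho(y,z)^{-\g}$ for $y,z\in X_0$, together with the single-sided bound $G(z,y)\le C\tilde\rho(z,y)^{-\g}$ valid for every $z\in X$.

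I first derive (HJ) in the metric $\tilde\rho$ from (HJ$'$) by means of the sandwich. With $s:=C^{-1}r^\g$ and $\tilde\a:=\b\a^{1/\g}$ (for $\a$ from (HJ$'$)), the inclusions $\tilde U(x,\tilde\a r)\subset \overset\circ V(x,\a s)$, $\tilde U(x,\tilde\a^2 r)\subset \overset\circ V(x,\a^2 s)$, and $\overset\circ V(x,s)\subset \tilde U(x,r)$ give $\tilde U(x,r)^c\subset \overset\circ V(x,s)^c$; combined with the monotonicity consequences of (\ref{it-bal-1}), this yields $\mx^{\tilde U(x,\tilde\a r)}\le c_J\my^{\tilde U(x,r)}$ on $\tilde U(x,r)^c$ whenever $y\in\tilde U(x,\tilde\a^2 r)$. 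Property (SC) for $m_0(r)=r^\g$ is immediate. The upper bound $\kap\tilde U(x,r)\le cr^\g$ in (T) is the analog of Lemma \ref{AnuB},1 using the two-sided comparison of $G$ on $X_0\times X_0$, and the lower bound $\kap\tilde U(x,r)\ge c^{-1}r^\g$ is obtained by mimicking Proposition \ref{amounts},2: apply (G$_1$) to $A:=\ov{\tilde U(x,r/4)}$ in $\tilde U(x,r/2)$ to produce $B$ and $\nu$ with $\|\vy^A\|\le G\nu(y)\le c_1\|\vy^B\|$, invoke $(\ov G{}_3')$ at the level set $\ov{V(x,s'')}\subset A$ with $s'':=C^{-1}(r/4)^\g$, and evaluate at an exterior point $z\in X_0\cap\tilde U(x,r)^c$ so that $w(z)\ge\lam$. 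The Krylov--Safonov estimate (KS) follows by transcribing Proposition \ref{hit-A} to $\tilde\rho$-balls, the gap between the lower bound on $G\nu(x)$ and the upper bound on $G\nu(z)$ being opened by shrinking the parameter $\tilde\a$ as in (\ref{a-for-KS}), since the scaling $\tilde g(\tilde\a r)/\tilde g(r)=\tilde\a^{-\g}$ can be made arbitrarily large.

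Theorem \ref{m-harnack} then yields (HI) on $\tilde\rho$-balls for bounded positive harmonic functions, and Proposition \ref{bounded-unbounded},1 extends this to every $h\in\H^+(\tilde U(x,R))$, delivering (\ref{HI-general}). The principal obstacle is the lower bound on $\kap\tilde U(x,r)$: because $w$ is guaranteed bounded below only on $X_0$, one must choose the exterior point $z$ in $X_0\cap\tilde U(x,r)^c$ so that the factor $G(z,x)\tilde\rho(z,x)^\g=w(z)w(x)\tilde G(z,x)\tilde\rho(z,x)^\g$ is bounded below by $\lam^2C^{-1}$; the hypothesis $r<\tilde R_0(x)$, together with $X_0\ne X$, ensures a suitable annulus inside $X_0$ outside $\tilde U(x,r)$ where such a $z$ lives. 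Elsewhere in the argument $w(z)$ enters only through the single-sided bound $G(z,y)\le\tilde G(z,y)\le C\tilde\rho(z,y)^{-\g}$, which holds for every $z\in X$ because $w\le 1$.
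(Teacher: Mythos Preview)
Your route is genuinely different from the paper's. The paper \emph{normalizes}: it passes to $\tilde\mu_x^U:=\dfrac{w}{w(x)}\,\mu_x^U$ and to $\tilde G$, which by Proposition~\ref{tilde-rho} satisfies (G$_2$) with $\tilde g(r)=r^{-\g}$ on all of $X\times X$. Then the machinery of Section~\ref{Green-Harnack} applies verbatim to $(\tilde\mu,\tilde G,\tilde\rho)$---in particular Proposition~\ref{amounts},2 yields (G$_3$) for $\tkap$, and Theorem~\ref{G-harnack} gives (HI) for $\tilde\H^+=\frac1w\,\H^+$; the inequality is transferred back via $\lambda\le w\le 1$ on $X_0$. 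You instead keep the original $\mu_x^U$ and the $G$-capacity $\kap$, and verify (T), (SC), (KS), (HJ) for $\tilde\rho$-balls by hand before invoking Theorem~\ref{m-harnack}. This is perfectly viable and avoids the detour through $\tilde\H$.

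There is, however, a gap exactly at your ``principal obstacle''. The assertion that $r<\tilde R_0(x)$ and $X_0\ne X$ guarantee a point $z\in X_0\setminus\tilde U(x,r)$ is not true in general: if $X_0$ is a clopen proper subset of $X$ one may have $X_0=\tilde U(x,r)$. The correct remedy is simpler than restricting $z$ to $X_0$---keep the factor $w(z)$ in the \emph{upper} bound too. For $y\in B\subset\tilde U(x,r/2)\subset X_0$ one has
\[
G(z,y)=w(z)w(y)\tilde G(z,y)\le w(z)\,C\,\tilde\rho(z,y)^{-\g}\le w(z)\,2^\g C\,\tilde\rho(z,x)^{-\g},
\]
while $(\ov G{}_3')$ applied to $\ov{V(x,s'')}\subset A$ gives
\[
\|\vz^A\|\ge c_3^{-1}s''\,G(z,x)=c_3^{-1}s''\,w(z)w(x)\tilde G(z,x)\ge c_3^{-1}s''\,w(z)\,\lambda\,C^{-1}\tilde\rho(z,x)^{-\g},
\]
using only $w(x)\ge\lambda$. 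The common factor $w(z)\tilde\rho(z,x)^{-\g}$ cancels, so $\|\nu\|\ge c\,r^\g$ with $c$ independent of $z$, and any $z\in X\setminus X_0\subset\tilde U(x,r)^c$ suffices. This cancellation is precisely what the paper's normalization to $\tilde G$ achieves automatically.
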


To that end we introduce   normalized  measures
and normalized harmonic functions. 
 For all $x\in X$,  open sets $U$ and closed sets $A$ in $X$, let
\begin{equation}\label{bal-tilde}
                                   \tilde \mu_x^U:=\frac w{w(x)} \, \mx^U, \qquad
            \tilde \ve_x^A:=\tilde \mu_x^{A^c}=\frac w{w(x)} \, \vx^A.
\end{equation} 
For every $U\in \U(X_0)$, let  $\tilde \H(U)$ be 
the set of all universally measurable real  functions~$\tilde h$ on~$X$ 
  such that, for  all  open
sets $V$ with $\ov V\subset U$ and~$x\in V$, the function $\tilde h$ is $\tilde \mu_x^V$-integrable and 
\begin{equation*} 
                     \int \tilde h\,d\tilde \mu_x^V=\tilde h(x).
\end{equation*} 
Obviously,
\begin{equation}\label{tilde-hh}
                                  \tilde \H(U)=\frac 1w\, \H(U).
\end{equation} 

We shall prove that Theorem \ref{G-harnack}   holds for $\tilde \H^+(\tilde U(x,r))$.
Then Theorem~\ref{harnack-general}
follows  easily using (\ref{tilde-hh}), since  $w\le 1$ and $w\ge \lambda$ on $X_0$. 

So let us  verify the assumptions made for Theorem \ref{G-harnack}.
 It is immediately seen  that (M$_0$), (M$_1$) hold for the measures $\tilde \mu_x^U$
(the inequality $\|\tilde \mu_x^U\|\le 1$ is equivalent to (\ref{w-superharmonic})).
Further, for all closed sets $A\subset X_0$, measures  $\nu$ on $A$ and  $x\in X$, 
\begin{equation*}
      G\nu=w\tilde G(w\nu) \und
        \frac{\lambda}{  w(x)} \,\vx^A \le  \tilde \ve_x^A\le \frac1{w(x)}\,  \vx^A. 
\end{equation*} 
Therefore  (G$_1$) holds for the measures $\tilde \ve_x^A$ and $\tilde G$ with
$\tilde  c_1:= \lambda\inv  c_1$ instead of $c_1$ (and measure $\tilde \nu:=w\nu$ instead of $\nu$).
To deal with (G$_3$) we first show the following.

\begin{proposition}\label{tildeG3}
Property {\rm ($\ov G_3$)}  holds for $\tilde G$,   $\tilde \rho$ and $\tilde g(r)=r^{-\g}$.
\end{proposition}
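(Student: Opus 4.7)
The plan is to deduce $(\ov G_3)$ for the triple $(\tilde G,\tilde\rho,\tilde g)$ from the hypothesis $(\ov G_3')$, using the inclusion (\ref{UVU}) from Proposition~\ref{tilde-rho} as a dictionary between $\tilde\rho$-balls $\tilde U(x,r)$ and sub-level sets $V(x,s)$ of $G(\cdot,x)^{-1}$. The strategy has three steps: first, trap $\tilde U(x,r)$ from inside by a suitable $V(x,s)$ so that $(\ov G_3')$ supplies a lower bound on $\|\vy^{\ov{V(x,s)}}\|$; second, transfer this bound to $\|\vy^{\ov{\tilde U(x,r)}}\|$ by monotonicity of $A\mapsto\|\vy^A\|$; third, pass from $\vy$ to the normalized $\tilde\ve_y$ of (\ref{bal-tilde}) and from $G$ to $\tilde G$.

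For the first two steps, I would fix $x\in X_0$, $0<r<\tilde R_0(x)$, $y\in \tilde U(x,r)^c$, and set $s:=C^{-1}r^\gamma$. By (\ref{UVU}), $V(x,s)\subset \tilde U(x,r)$, so $y\in V(x,s)^c$. A small enlargement $r\rightsquigarrow r+\epsilon<\tilde R_0(x)$, together with the same inclusion, yields $\ov{V(x,s)}\subset \ov{\tilde U(x,r+\epsilon)}\subset X_0$ and in fact $s<S_0(x)$, so $(\ov G_3')$ applies and gives
\[
\|\vy^{\ov{V(x,s)}}\|\ge c_3^{-1} s\,G(y,x)=c_3^{-1}C^{-1}r^\gamma\,G(y,x).
\]
Since $\ov{V(x,s)}\subset \ov{\tilde U(x,r)}$, monotonicity (i.e.\ (\ref{monoto}) applied to complements) carries the same lower bound to $\|\vy^{\ov{\tilde U(x,r)}}\|$.

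For the third step, the normalized measure $\tilde\ve_y^{\ov{\tilde U(x,r)}}=(w/w(y))\,\vy^{\ov{\tilde U(x,r)}}$ from (\ref{bal-tilde}) is supported in $\ov{\tilde U(x,r)}\subset X_0$, where $w\ge \lambda$, so $\int w\,d\vy^{\ov{\tilde U(x,r)}}\ge \lambda\,\|\vy^{\ov{\tilde U(x,r)}}\|$. Using the identity $G(y,x)=w(y)w(x)\tilde G(y,x)$ and $w(x)\ge \lambda$, the normalizing factors collect to yield
\[
\|\tilde\ve_y^{\ov{\tilde U(x,r)}}\|\ge \lambda^2\, c_3^{-1} C^{-1}\,r^\gamma\,\tilde G(y,x),
\]
which is $(\ov G_3)$ for $(\tilde G,\tilde\rho,\tilde g)$ with constant $\tilde c_2:=\lambda^{-2}c_3 C$. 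I do not expect a genuine obstacle here: the only points that deserve care are the strict inequality $s<S_0(x)$ (handled by the enlargement $r\rightsquigarrow r+\epsilon$) and the systematic bookkeeping of the $w$-normalization through (\ref{bal-tilde}) and the factorization $G(y,x)=w(y)w(x)\tilde G(y,x)$.
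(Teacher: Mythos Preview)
Your proposal is correct and follows essentially the same approach as the paper's proof: choose $s=C^{-1}r^\gamma$, use the inclusion $V(x,s)\subset\tilde U(x,r)$ from Proposition~\ref{tilde-rho}, apply $(\ov G_3')$ to get a lower bound on $\|\vy^{\ov{V(x,s)}}\|$, pass to $\|\vy^{\ov{\tilde U(x,r)}}\|$ by monotonicity, and then normalize using $w\ge\lambda$ on $X_0$ together with $G(y,x)=w(y)w(x)\tilde G(y,x)$ to arrive at $\|\tilde\ve_y^{\ov{\tilde U(x,r)}}\|\ge \lambda^2 c_3^{-1}C^{-1}r^\gamma\,\tilde G(y,x)$. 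You are in fact a bit more explicit than the paper about the verification $s<S_0(x)$ via the enlargement $r\rightsquigarrow r+\epsilon$, which the paper leaves implicit.
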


\begin{proof} Let $\tilde c_3:= c_3 C/\lambda^{2}$.
Let $x\in X_0$, $0<r<\tilde R_0(x)$ and $s:=C\inv r^\g$.
By Proposition \ref{tilde-rho},  $V(x,s)\subset \tilde U(x,r)$. 
Let $y\in \tilde U(x,r)^c$.
 Using (\ref{G3prime})  we  see that
\begin{equation*}
\|\tilde \vy^{\ov {\tilde U(x,r)}}\|\ge \frac \lambda {w(y)} \| \, \vy^{\ov{ V(x,s)}}\|
\ge c_3\inv \frac \lambda{w(y)}\, s\, G(y,x)\ge c_3\inv \lambda^2\, s\, \tilde G(y,x)
= \tilde c_3\inv  r^\g \tilde G(y,x).
\end{equation*} 
\end{proof}

\begin{corollary}\label{G3tilde}
Property {\rm ($ G_3$)}  holds for $\tilde G$,  $\tilde \rho$, $\tilde g(r)=r^{-\g}$
and the capacity~$\tkap$ given by $\tilde G$.
\end{corollary}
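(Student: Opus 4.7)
The plan is to derive Corollary~\ref{G3tilde} immediately from Proposition~\ref{amounts}, part~2, applied in the ``tilde world'' with the normalized measures $\tilde\mu_x^U$ and the Green function $\tilde G$, since Proposition~\ref{tildeG3} already establishes $(\ov G_3)$ for $\tilde G$ with $\tilde\rho$ and $\tilde g(r)=r^{-\gamma}$. The bulk of the work is to verify that Proposition~\ref{amounts}, part~2, is actually applicable here; that is, that $(\mathrm{G}_1)$ and $(\mathrm{G}_2)$ hold in this normalized setting and that $X_0\ne X$.

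First, I would record that the normalized measures $\tilde\mu_x^U$ satisfy $(\mathrm{M}_0)$ and $(\mathrm{M}_1)$: the total-mass bound $\|\tilde\mu_x^U\|\le 1$ is exactly (\ref{w-superharmonic}), while the support property and the iteration identity are inherited from the corresponding properties of $\mu_x^U$ via the pointwise rescaling $\tilde\mu_x^U=(w/w(x))\mu_x^U$. Second, $(\mathrm{G}_1)$ transfers from $G$ to $\tilde G$: using the identity $G\nu=w\,\tilde G(w\nu)$ and the two-sided bound $(\lambda/w(x))\vx^A\le \tilde\vx^A\le (1/w(x))\vx^A$ noted just before the corollary, the closed neighborhood $B$ supplied by $(\mathrm{G}_1)$ for $G$, together with the measure $\tilde\nu:=w\nu$, produces the bounds (\ref{ABnu}) for $\tilde G$ with constant $\tilde c_1:=c_1/\lambda$ in place of $c_1$. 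Third, Proposition~\ref{tilde-rho} gives $(\mathrm{G}_2)$ for $\tilde G$, $\tilde\rho$, and $\tilde g(r)=r^{-\gamma}$.

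With these three ingredients in hand, plus $(\ov G_3)$ from Proposition~\ref{tildeG3} and the standing assumption that $X_0$ is a proper subset of $X$ throughout this section, I would invoke Proposition~\ref{amounts}, part~2, with the tilde data to conclude that $(\mathrm{G}_3)$ holds for $\tilde G$, $\tilde\rho$, $\tilde g$, and the capacity $\tkap$ induced by $\tilde G$. The only real obstacle is bookkeeping: one must apply Proposition~\ref{amounts} with the updated constants in the tilde world (in particular $\tilde c_1$ in place of $c_1$) and check that the capacity produced is precisely $\tkap$, defined via $\tilde G$ rather than $G$, rather than some new object. No fresh estimates are required beyond those already compiled in Propositions~\ref{tilde-rho} and \ref{tildeG3}.
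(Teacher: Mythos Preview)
Your proposal is correct and follows exactly the paper's approach: the paper's proof is simply ``Propositions~\ref{tildeG3} and~\ref{amounts},'' and the verifications of $(\mathrm{M}_0)$, $(\mathrm{M}_1)$, $(\mathrm{G}_1)$, $(\mathrm{G}_2)$ in the tilde setting that you spell out are precisely the observations the paper makes in the paragraph preceding Proposition~\ref{tildeG3} and in Proposition~\ref{tilde-rho}. Your write-up is just a more explicit unpacking of the same two citations.
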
 

\begin{proof} Propositions \ref{tildeG3}          and \ref{amounts}.
\end{proof}

\begin{lemma}\label{tildeHJ}
Suppose that {\rm (HJ$'$)} holds. Then {\rm(HJ)}  holds for $\tilde \rho$ and the measures~$\tilde\mu_x^{\tilde U(x,r)}$, 
$x\in X_0$, $0<r<\tilde R_0(x)$. 
\end{lemma}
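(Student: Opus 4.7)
The plan is to transport (HJ$'$), which is a statement about the intrinsic Green balls $V(x,s)$, into the target property (HJ) for the metric balls $\tilde U(x,r)$ with the normalized measures $\tilde\mu$. The bridge is the two-sided sandwich (\ref{UVU}), namely $\tilde U(x, \b r) \subset V(x, C\inv r^\g) \subset \tilde U(x, r)$, together with the elementary monotonicity $\mu_x^U \ge \mu_x^V$ on $U^c$ whenever $V \subset U$, which is read off directly from (\ref{it-bal-1}).

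Let $\a' \in (0,1)$ and $c_J' \ge 1$ be the constants supplied by (HJ$'$). I would first fix $\tilde\a \in (0,1)$ small enough that, for every $x \in X_0$ and $r>0$, both of the inclusions
\[
\tilde U(x,\tilde\a r)\subset V(x,\a' s)\quad\text{and}\quad \tilde U(x,\tilde\a^2 r)\subset V(x,{\a'}^2 s),\qquad s:=C\inv r^\g,
\]
hold. By (\ref{UVU}), this is guaranteed as soon as $\tilde\a\le \b(\a')^{1/\g}$ and $\tilde\a^2\le \b(\a')^{2/\g}$, and both constraints can plainly be arranged. Now fix $x \in X_0$, $0<r<\tilde R_0(x)$, $y\in \tilde U(x,\tilde\a^2 r)$, and a Borel set $E\subset \tilde U(x,r)^c$. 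Since $V(x,\a' s)\subset V(x,s)\subset \tilde U(x,r)$, the set $E$ lies in both $V(x,s)^c$ and $V(x,\a' s)^c$, while $V(x,s)\subset \tilde U(x,r)\subset X_0$ forces $s<S_0(x)$, so that (HJ$'$) is applicable at scale $s$. The three-step chain
\[
\mu_x^{\tilde U(x,\tilde\a r)}(E)\le \mu_x^{V(x,\a' s)}(E)\le c_J'\,\mu_y^{V(x,s)}(E)\le c_J'\,\mu_y^{\tilde U(x,r)}(E)
\]
then follows from (i) monotonicity applied to $\tilde U(x,\tilde\a r)\subset V(x,\a' s)$ with $E\subset V(x,\a' s)^c$, (ii) (HJ$'$) applied at the point $y\in V(x,{\a'}^2 s)$, and (iii) monotonicity applied to $V(x,s)\subset \tilde U(x,r)$ with $E\subset \tilde U(x,r)^c$.

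To convert this into a statement about $\tilde\mu$, I would integrate the pointwise inequality of measures against the positive weight $w$. Using the definition (\ref{bal-tilde}),
\[
w(x)\,\tilde\mu_x^{\tilde U(x,\tilde\a r)}(E)=\int_E w\,d\mu_x^{\tilde U(x,\tilde\a r)}\le c_J'\int_E w\,d\mu_y^{\tilde U(x,r)}=c_J'\,w(y)\,\tilde\mu_y^{\tilde U(x,r)}(E).
\]
Because $y\in \tilde U(x,r)\subset X_0$, the uniform bounds $\lambda\le w\le 1$ on $X_0$ from (G$_2'$) give $w(y)/w(x)\le 1/\lambda$, and (HJ) for $\tilde\mu$ follows with parameter $\tilde\a$ and constant $\tilde c_J:=c_J'/\lambda$. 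The only delicate point of the argument is the initial bookkeeping needed to choose $\tilde\a$ so that both sandwich inclusions hold simultaneously; beyond that, the proof is a transparent chain of standard monotonicity properties plus a ratio bound on~$w$.
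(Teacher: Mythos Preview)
Your argument is essentially the paper's own proof: choose $\tilde\a$ so that the sandwich (\ref{UVU}) nests the $\tilde U$-balls inside the appropriate $V$-balls, run the three-step monotonicity chain via (HJ$'$), and then pass from $\mu$ to $\tilde\mu$ using $\lambda\le w\le 1$ on~$X_0$. The paper simply takes $\tilde\a:=\a\b$ (which works because $\g\ge 1$ gives $\a^{k\g}\le\a^k$), matching your condition $\tilde\a\le\b\,\a^{1/\g}$.

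One technical point to repair: since $G$ is only Borel measurable, the sets $V(x,s)=\{G(\cdot,x)^{-1}<s\}$ need not be open, so the expressions $\mu_x^{V(x,\a' s)}$ and $\mu_y^{V(x,s)}$ are not a~priori defined. Both the paper and the hypothesis (HJ$'$) work with the interiors $\overset\circ V(x,s)$; your inclusions $\tilde U(x,\tilde\a^k r)\subset V(x,{\a'}^k s)$ automatically yield $\tilde U(x,\tilde\a^k r)\subset \overset\circ V(x,{\a'}^k s)$ because $\tilde U(x,\cdot)$ is open, and $E\subset \tilde U(x,r)^c\subset V(x,s)^c\subset \overset\circ V(x,s)^c$, so the chain goes through verbatim once you replace $V$ by~$\overset\circ V$.
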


\begin{proof} Let $\tilde \a:=\a\b$, $x\in X_0$, $0<r<\tilde R_0(x)$,  
$s:=C\inv r^\g$. Then
\begin{equation}\label{UkV} 
       \tilde      U(x,\tilde \a^k r)\subset \overset\circ V(x, \a^k s), \qquad k\in\nat.
\end{equation} 
Indeed, $\tilde \a^k\le  \b \a^k$, where $\a^{k\g}\le \a^k$, since $\g\ge 1$.
So (\ref{UkV}) follows from the first inclusion in (\ref{UVU}).
Let $y\in \tilde U(x,\tilde \a^2 r)$. Then, by  (\ref{it-bal-1}), (HJ$'$) and (\ref{UVU}), 
\begin{equation*}
  \mu_x^{\tilde U(x,\tilde \a r)}\le       
  \mu_x^{{\overset\circ V(x,\a s)}}\le   c_J \mu_y^{{\overset\circ V(x,s)}}\le c_J \mu_y^{\tilde U(x,r)} \on  \overset\circ V(x,s)^c,
\end{equation*} 
which contains $\tilde U(x,r)^c$.
Hence,  by (\ref{bal-tilde}),  
\begin{equation*} 
\tilde \mu_x^{\tilde U(x,\tilde \a r)}=\frac{w}{w(x)} \,\mu_x^{\tilde U(x,\tilde \a r)}
\le \lambda\inv c_J\,\frac {w}{w(y)} \, \mu_y^{\tilde U(x,r)} 
= \lambda\inv c_J\tilde \mu_y^{\tilde U(x,r)} \on \tilde U(x,r)^c.
\end{equation*} 
 \end{proof} 
 
 As already indicated, Theorem \ref{harnack-general} now follows by an application of~Theorem \ref{G-harnack}
to~$\tilde \H^+(\tilde U(x_0,R))$ and $\tilde \rho$ (recall the identity (\ref{tilde-hh}) and the inequalities
$\lambda\le w\le 1$ on $X_0$).

{\small \noindent 
Wolfhard Hansen,
Fakult\"at f\"ur Mathematik,
Universit\"at Bielefeld,
33501 Bielefeld, Germany, e-mail:
 hansen$@$math.uni-bielefeld.de}\\
{\small \noindent Ivan Netuka,
Charles University,
Faculty of Mathematics and Physics,
Mathematical Institute,
 Sokolovsk\'a 83,
 186 75 Praha 8, Czech Republic, email:
netuka@karlin.mff.cuni.cz}

\end{document}